\documentclass[12pt]{article}
\usepackage{amsmath, amsthm, amsfonts}
\usepackage{amssymb}
\usepackage{amscd}
\usepackage{verbatim}

\begin{document}
\newcommand{\eqdef}{\stackrel{\mathrm{def}}{=}}
\newcommand{\hilb}{{\mathcal{H}_{\mathbf{a}}}}
\newcommand{\dom}{{\mathcal D}}
\newcommand{\be}{\begin{equation}}
\newcommand{\ee}{\end{equation}}
\newcommand{\thkl}{\rule[-.5mm]{.3mm}{3mm}}
\newcommand{\pde}[2]{\dfrac{\partial{#1}}{\partial{#2}}}
\newcommand{\mpde}[3]{\dfrac{\partial^{#3}{#1}}{\partial{{#2}^{#3}}}}
\newcommand{\cw}{\stackrel{D}{\rightharpoonup}}
\newcommand{\id}{\operatorname{id}}
\newcommand{\supp}{\operatorname{supp}}
\newcommand{\wlim}{\mbox{ w-lim }}
\newcommand{\ntuple}{\{1,\dots,N\}}
\newcommand{\mymu}{{x_N^{-p_*}}}
\newcommand{\R}{{\mathbb R}}
\newcommand{\A}{{\mathcal{A}}}
\newcommand{\N}{{\mathbb N}}
\newcommand{\Z}{{\mathbb Z}}
\newcommand{\Q}{{\mathbb Q}}
\newcommand{\dx}[1]{\mathrm{d}{#1}}
\newtheorem{theorem}{Theorem}[section]
\newtheorem{corollary}[theorem]{Corollary}
\newtheorem{lemma}[theorem]{Lemma}
\newtheorem{definition}[theorem]{Definition}
\newtheorem{remark}[theorem]{Remark}
\newtheorem{proposition}[theorem]{Proposition}
\newtheorem{conjecture}[theorem]{Conjecture}
\newtheorem{question}[theorem]{Question}
\newtheorem{example}[theorem]{Example}
\newtheorem{Thm}[theorem]{Theorem}
\newtheorem{Lem}[theorem]{Lemma}
\newtheorem{Pro}[theorem]{Proposition}
\newtheorem{Def}[theorem]{Definition}
\newtheorem{Exa}[theorem]{Example}
\newtheorem{Exs}[theorem]{Examples}
\newtheorem{Rems}[theorem]{Remarks}
\newtheorem{Rem}[theorem]{Remark}
\newtheorem{Cor}[theorem]{Corollary}
\newtheorem{Conj}[theorem]{Conjecture}
\newtheorem{Prob}[theorem]{Problem}
\newtheorem{Ques}[theorem]{Question}
\newcommand{\pf}{\noindent \mbox{{\bf Proof}: }}


\renewcommand{\theequation}{\thesection.\arabic{equation}}
\catcode`@=11
\@addtoreset{equation}{section}
\catcode`@=12


\title{Compactness properties and ground states for the affine Laplacian}
\author{Ian Schindler
\\
{\small  University of Toulouse 1}\\
 {\small F-31400 Toulouse, France }\\
{\small ian.schindler@ut-capitole.fr}\\\and Cyril Tintarev\footnote{This author expresses his gratitude to CEREMATH at University of Toulouse - Capitole for their warm hospitality, and to Torbj\"orn Olsson for his  assistance crucial for this visit.}
\\{\small Uppsala University}\\
{\small SE-751 06 Uppsala, Sweden}\\{\small
tammouz@gmail.com}}
\date{}

\maketitle
\newcommand{\dnorm}[1]{\thkl #1 \thkl\,}

\abstract{The paper studies compactness properties of the affine Sobolev inequality of Gaoyong Zhang et al \cite{Zhang-1,Zhang-p} in the case $p=2$, and existence and regularity of related minimizers, in particular, solutions to the nonlocal Dirichlet problems
\begin{equation*}
-\sum_{i,j=1}^{N}(A^{-1}[u])_{ij}\frac{\partial^2u}{\partial x_i\partial x_j}=f \mbox{ in }\Omega\subset\R^N, 
\end{equation*}
and
\begin{equation*}
-\sum_{i,j=1}^{N}(A^{-1}[u])_{ij}\frac{\partial^2u}{\partial x_i\partial x_j}=u^{q-1}\,,\quad  u>0,\mbox{ in }\Omega\subset\R^N,
\end{equation*}
where  $A_{ij}[u]=\int_\Omega\frac{\partial u}{\partial x_i}\frac{\partial u}{\partial x_j}\dx{x}$ and $q\in(2,\frac{2N}{N-2})$.
}
\section{Introduction}
Affine Sobolev inequality of Gaoyong Zhang \cite{Zhang-1,Zhang-p}
\begin{equation}\label{eq:afS}
 J_p(u)\eqdef \left(\int_{S_1} \dfrac{\dx{S_\omega}}{\|\omega\cdot \nabla u\|_p^{N}
} \right)^{-1/N}\ge C\|u\|_{p^*},
\end{equation} 
where $1\le p<N$, $p^*=\frac{pN}{N-p}$, and $\|\cdot\|_p$ denotes the $L^p(\R^N)$-norm,
is a refinement of the limiting Sobolev inequality $\|\nabla u\|_p\ge C\|u\|_{p^*}$ in the sense that $J_p$ is bounded by the gradient norm $\|\nabla \cdot\|_p$ (inequality (7.1) in \cite{Zhang-p} that easily follows from the definition), but not vice versa. Similarities between functionals $J_p$ and $u\mapsto \|\nabla u\|_p$, in addition to dominating the norm of $L^{p^*}(\R^N)$, include the following immediate properties: both functionals are invariant with respect to actions of translations, dilations, and orthogonal rotations, and, furthermore, they coincide on radially symmetric functions.  In addition to that, however, the affine Sobolev functional is invariant with respect to the action of the group $SL(N)$ of unimodular matrices, i.e. $J_p(u\circ T)=J_p(u)$ whenever $\det T=1$. On the other hand, $\sup_{\det T=1}\|u\circ T\|_{\dot H^{1,p}}=\infty$ for any $u\in C_0^\infty(\R^N)\setminus\{0\}$, as it can be easily tested on diagonal matrices, which implies that the inequality $\|\nabla u\|_p\le CJ_p(u)$ is false. 
Applications of the affine Sobolev inequality to information theory are discussed in \cite{Zhang-p}.

In the present paper we study the case $p=2$, where there is a simple relation \eqref{eq:asinf} between the affine Sobolev functional $J_2$ and the gradient norm (this connection is cursively mentioned on p. 20 of \cite{Zhang-p}).This relation yields a one-line proof (see \eqref{eq:proofSob} below) of the affine Sobolev inequality \eqref{eq:afS} for this case. 

The main objective of this paper is to study compactness properties of the affine Sobolev inequality and existence of minimizers in variational problems involving the functional $J_2$. We prove that, similarly to Sobolev embeddings,
the set $\{u\in H_0^1(\Omega), J_2(u)\le 1\}$ is compact in $L^p(\Omega)$,
$1\le p< \frac{2N}{N-2}$, whenever $\Omega$ is a bounded domain (or, more generally, unbounded domains of the null-flask type defined below). The method of the proof is, however, different from the classical Sobolev case and is based on the concentration compactness argument, more specifically, on  profile decompositions of functions with the bounded $J_2$. This profile decomposition is then used to study existence of solutions of variational problems involving $J_2$. 
In Section 2 we outline some basic properties of the functional $J_2$. In Section 3 we study compactness properties of $J_2$ and some simple variational problems. A short Section 4 presents profile decompositions for sequences with a $J_2$-bound. Section 5 return to variational problems, which are handled with help of the profile decomposition. In Section 6 we list some open problems. Appendix contains another proof of the affine Sobolev inequality for the case $p=2$ and, for convenience of the reader, cites profile decomposition theorems for $H^{1,2}(\R^N)$ and $\dot H^{1,2}(\R^N)$.

\section{Elementary properties of the affine Sobolev functional}
\subsection{An equivalent definition of $J_2$ }
Invariance of the functional $J_p$ with respect to actions of unimodular matrices is immediate from the following identity from \cite{Zhang-p} (easily derived by radial integration): 
\begin{equation}\label{eq:inv}
 J_p(u)= \left(\frac{1}{(N-1)!} \int_{\R^N} e^{-\|\xi \cdot \nabla u\|_p} \mathrm{d}\xi \right)^{-1/N}.
\end{equation}
In what follows we always assume $p=2$ and $N>2$.  If we set
\begin{equation}\label{eq:Ascript}
\A_{i,j}[u](x) \eqdef \pde{u}{x_i} \pde{u}{x_j},
\end{equation}
we can represent the $L^2$-norm in \eqref{eq:afS} as
\begin{equation}
\|\xi\cdot\nabla u\|_2^2=\int_{\R^N}   \A[u](x) \xi\cdot \xi
\,\mathrm{d}x,\; \xi\in \R^{N}.
\end{equation}
Let now
\begin{equation}\label{eq:straightA}
A_{i,j}[u]\eqdef \int_{\R^N}\A_{i,j}[u](x)\dx{x}.
\end{equation}
Substituting \eqref{eq:Ascript} into \eqref{eq:inv} and taking $\eta=A[u]^{1/2}\xi$, we have
\begin{eqnarray*}
\int_{\R^N} e^{-\|\xi \cdot \nabla u\|_2} \mathrm{d}\xi & = & \int_{\R^N}  
  e^{- (\int_{\R^N}   \A[u](x) \xi\cdot \xi
  	 \,\mathrm{d}x)^{1/2}} \mathrm{d}{\xi} \\
= \int_{\R^N}  
  e^{- (A[u]\xi\cdot \xi)^\frac12 }\mathrm{d} \xi &= & \int_{\R^N}e^{- |\eta|} (\det A[u])^{-1/2} \mathrm{d} \eta \\
&=& \omega_N (N-1)!(\det A[u])^{-1/2},
\end{eqnarray*}
where $\omega_N$ is the area of a unit sphere in $\R^N$. 
We conclude that
\begin{equation}\label{eq:ournorm}
J_2(u)=\omega_N^{-1/N}(\det A[u])^{1/2N}.
\end{equation}
Note that this expression presumes that the matrix $A[u]$ is well-defined, which is the case if and only if $\nabla u\in L^2$.
In what follows we will fix the  domain of $J_2$ as  $\dot H^{1,2}(\R^N)$.

We will also consider below a functional 
$$J_{2,\Omega}(u)\eqdef \omega_N^{-1/N} (\det A_{\Omega}[u])^{1/2N}$$ where
$A_{\Omega}[u]=\int_{\Omega}\A_{i,j}[u](x)\dx{x}$, $\Omega\subset\R^N$ is an open set, and $u\in H^{1,2}(\Omega)$. Note that 
if $J_{2,\Omega}(u)=0$, and $\Omega$ is convex, then there is a  family  of parallel hyperplanes, such that $u$ is constant on their intersection with $\Omega$.
\subsection{Reduction to the gradient norm}
We would like to characterize the behavior of the matrix \eqref{eq:straightA} relative to action of unimodular matrices.

\begin{lemma}\label{lem:transf}
	Let $T \in SL(N)$ and let $u \in \dot H^{1,2}(\R^N)$. Then \begin{equation}\label{eq:transf}
	A[u \circ T] = T^* A[u]\, T. 
	\end{equation}
	In particular for every  $u \in  \dot H^{1,2}(\R^N)$ there is a $T_0 \in O(N)$ such that $A[u \circ T_0]$ is diagonal, and a  $T \in SL(N)$ such that 	$A[u \circ T]=\det(A[u])I$ and 
	\begin{equation}
\det A[u]^{1/2N} =\det A[u\circ T]^{1/2N} = \frac{1}{\sqrt{N}}\|\nabla (u \circ T)\|_2.
	\end{equation}
\end{lemma}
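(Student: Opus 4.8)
The plan is to compute $A[u\circ T]$ directly from the definition \eqref{eq:straightA}, using the chain rule to express the partial derivatives of $u\circ T$, and then to exploit the three successive normalizations afforded by orthogonal diagonalization, a diagonal rescaling, and a final orthogonal reduction. First I would write $\partial_{x_i}(u\circ T)(x) = \sum_k (\partial_k u)(Tx)\,T_{ki}$, so that $\A_{i,j}[u\circ T](x) = \sum_{k,\ell} T_{ki}T_{\ell j}(\partial_k u)(Tx)(\partial_\ell u)(Tx)$. Integrating over $\R^N$ and performing the change of variables $y = Tx$ (whose Jacobian is $|\det T| = 1$ since $T\in SL(N)$) gives $A_{ij}[u\circ T] = \sum_{k,\ell} T_{ki}T_{\ell j} A_{k\ell}[u]$, which is exactly the matrix identity $A[u\circ T] = T^* A[u]\, T$. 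This is routine and I expect no obstacle here; the only point to check is that $u\in\dot H^{1,2}(\R^N)$ guarantees $\nabla u\in L^2$ so that the integrals and the change of variables are legitimate, and that $u\circ T$ is again in $\dot H^{1,2}(\R^N)$ (again because $|\det T|=1$).

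Next I would establish the diagonalization. The matrix $A[u]$ is symmetric and positive semidefinite (it is a Gram-type matrix: $A[u]\xi\cdot\xi = \|\xi\cdot\nabla u\|_2^2 \ge 0$), hence by the spectral theorem there is $T_0\in O(N)$ with $T_0^* A[u] T_0 = \mathrm{diag}(\lambda_1,\dots,\lambda_N)$, $\lambda_i\ge 0$; by \eqref{eq:transf} this diagonal matrix is $A[u\circ T_0]$. To get the scalar-matrix form, assume first that all $\lambda_i>0$ (equivalently $\det A[u]>0$), set $D = \mathrm{diag}(d_1,\dots,d_N)$ with $d_i$ chosen so that $d_i^2\lambda_i$ is a common constant $c$ and $\prod_i d_i = 1$, i.e. $d_i = \big(\prod_j\lambda_j\big)^{1/(2N)}\lambda_i^{-1/2}$, giving $c = \big(\prod_j\lambda_j\big)^{1/N} = (\det A[u])^{1/N}$; then $T = T_0 D\in SL(N)$ and $A[u\circ T] = D\,\mathrm{diag}(\lambda_i)\,D = c\,I = (\det A[u])^{1/N} I$. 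Taking determinants, $\det A[u\circ T] = c^N = \det A[u]$, confirming the middle equality. Finally, for such a $T$ one has $A[u\circ T] = \int_{\R^N}\A[u\circ T]\,\dx{x}$ with diagonal entries all equal to $c$, so $\|\nabla(u\circ T)\|_2^2 = \sum_i A_{ii}[u\circ T] = Nc = N(\det A[u])^{1/N}$, which on rearranging is precisely $(\det A[u])^{1/2N} = \tfrac{1}{\sqrt N}\|\nabla(u\circ T)\|_2$. (The extra orthogonal matrix mentioned in the statement can be absorbed harmlessly, since conjugating a scalar matrix by an orthogonal matrix leaves it unchanged; alternatively one composes $T$ with any $T_1\in O(N)$ and the formula is unaffected.)

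The one genuine subtlety — and the step I expect to be the main obstacle — is the degenerate case $\det A[u] = 0$, i.e. when some $\lambda_i = 0$. Then no diagonal rescaling can produce $\det A[u]I = 0$ as a nondegenerate scalar matrix in the naive way, and one cannot normalize a zero eigenvalue by a finite $d_i$. Here I would argue that the claimed identity still holds trivially in the sense that both sides can be made $0$: if $\lambda_i = 0$ for some $i$ then $\partial_{x_i}(u\circ T_0) = 0$ a.e.\ (since $\int |\partial_{x_i}(u\circ T_0)|^2 = \lambda_i = 0$), and by choosing a further diagonal $T$ that compresses strongly in the nondegenerate directions and stretches in the degenerate ones (keeping $\det T = 1$) one drives $\|\nabla(u\circ T)\|_2\to 0$; alternatively, and more cleanly, one simply notes that the statement of interest for the variational analysis is the nondegenerate case $J_2(u)>0$, and remarks that when $\det A[u]=0$ both $\det A[u]^{1/2N}$ and the infimum over $T\in SL(N)$ of $\tfrac{1}{\sqrt N}\|\nabla(u\circ T)\|_2$ equal zero, so the identity is read as an infimum rather than attained equality. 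I would phrase the lemma's proof so as to treat $\det A[u]>0$ with an explicit $T$ as above, and dispatch the degenerate case with this short remark.
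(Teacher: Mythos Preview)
Your proof is correct and follows essentially the same route as the paper's: chain rule plus the change of variables $y=Tx$ to get \eqref{eq:transf}, orthogonal diagonalization of the symmetric matrix $A[u]$, then a diagonal unimodular rescaling to make $A[u\circ T]$ a scalar matrix, and finally reading off $\|\nabla(u\circ T)\|_2^2$ as the trace. Your version is somewhat more careful---you write out the chain rule explicitly, you flag and handle the degenerate case $\det A[u]=0$ (which the paper's proof tacitly ignores by writing $A[u\circ T_0]^{-1/2}$), and your computation gives the correct scalar $(\det A[u])^{1/N}I$, silently fixing the exponent slip in the lemma's statement and proof.
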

\begin{proof}Equation \eqref{eq:transf} follows by elementary computation from the change of variable $Tx=y$, taking into account that $\partial_iu(Tx)\partial_ju(Tx)=[T^*\A[u](y)T]_{ij}$ and $\dx{x}=\dx{y}$. A suitable $T_0 \in O(N)$ makes $T_0^* A[u]\, T_0$ a diagonal matrix. 
	
	Applying the same transformation once again, with a diagonal unimodular matrix  $T'=\det(A[u \circ T_0])^{1/2}A[u \circ T_0]^{-1/2}$, we get  $A[u \circ T_0T']=\det(A[u \circ T_0])I=\det(A[u])I$. The last assertion follows once we note that 
	$\|\nabla u\circ T_0T'\|_2^2=N\det(A[u])^{1/N}$, since the latter expression is the trace of the diagonal matrix $A[u \circ T_0T']$ with $N$ equal eigenvalues. 	
\end{proof}
\begin{corollary} If $u\in \dot H^{1,2}(\R^N)$, then
\begin{equation}\label{eq:asinf}
	J_2(u)=\frac{\omega_N^{-1/N}}{\sqrt{N}}\min_{T\in SL(N)}\|\nabla(u\circ T)\|_2.
\end{equation}
\end{corollary}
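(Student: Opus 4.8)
The plan is to unwind the definition of $J_2$ given in \eqref{eq:ournorm} and rewrite the determinant as a minimum over $SL(N)$. Concretely, we begin from $J_2(u)=\omega_N^{-1/N}(\det A[u])^{1/2N}$, so it suffices to establish the identity
\[
(\det A[u])^{1/2N}=\frac{1}{\sqrt N}\min_{T\in SL(N)}\|\nabla(u\circ T)\|_2.
\]
The ``$\ge$'' direction is immediate from Lemma \ref{lem:transf}: the lemma produces an explicit $T\in SL(N)$ for which $\|\nabla(u\circ T)\|_2=\sqrt N\,(\det A[u])^{1/2N}$, so the minimum on the right is no larger than the left-hand side. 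The work, such as it is, lies in the reverse inequality.

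For ``$\le$'', the key input is again Lemma \ref{lem:transf}, which gives $A[u\circ T]=T^*A[u]\,T$ for every $T\in SL(N)$, hence $\det A[u\circ T]=(\det T)^2\det A[u]=\det A[u]$ since $\det T=1$; that is, $\det A[\cdot]$ is an $SL(N)$-invariant. Now for any fixed $T\in SL(N)$ I would apply the arithmetic--geometric mean inequality to the (nonnegative) eigenvalues $\lambda_1,\dots,\lambda_N$ of the symmetric matrix $A[u\circ T]$:
\[
\frac1N\|\nabla(u\circ T)\|_2^2=\frac1N\operatorname{tr}A[u\circ T]=\frac1N\sum_i\lambda_i\;\ge\;\Big(\prod_i\lambda_i\Big)^{1/N}=(\det A[u\circ T])^{1/N}=(\det A[u])^{1/N}.
\]
Here $\operatorname{tr}A[u\circ T]=\sum_{i}\int_{\R^N}|\partial_i(u\circ T)|^2\,\dx{x}=\|\nabla(u\circ T)\|_2^2$ directly from \eqref{eq:straightA} and \eqref{eq:Ascript}. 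Taking square roots gives $\|\nabla(u\circ T)\|_2\ge\sqrt N\,(\det A[u])^{1/2N}$ for every $T\in SL(N)$, hence the same bound for the infimum; combined with the ``$\ge$'' direction this shows the infimum is attained and equals $\sqrt N\,(\det A[u])^{1/2N}$. Multiplying by $\omega_N^{-1/N}/\sqrt N$ and invoking \eqref{eq:ournorm} yields \eqref{eq:asinf}.

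I do not expect any serious obstacle here: the corollary is essentially a restatement of Lemma \ref{lem:transf} packaged through the AM--GM inequality, and the only points requiring a line of care are that $A[u\circ T]$ is symmetric positive semidefinite (so AM--GM applies to its eigenvalues) and that the minimum is genuinely attained rather than merely approached — both of which are handed to us directly by the explicit minimizer constructed in the lemma. One should note in passing that $u\circ T\in\dot H^{1,2}(\R^N)$ for $T\in SL(N)$, so each quantity $\|\nabla(u\circ T)\|_2$ is finite and the statement is not vacuous; this too is implicit in the lemma.
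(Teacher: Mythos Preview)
Your proposal is correct and follows essentially the same route as the paper: the paper's proof invokes $\|\nabla v\|_2^2=\operatorname{tr}A[v]$, applies the AM--GM inequality to obtain $\det A[u]^{1/N}\le \frac{1}{N}\|\nabla(u\circ T)\|_2^2$ for all $T\in SL(N)$, and then cites Lemma~\ref{lem:transf} for attainment of the minimum. You have simply spelled out in more detail the step the paper leaves implicit, namely that $\det A[u\circ T]=\det A[u]$ via the transformation law \eqref{eq:transf}.
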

\begin{proof}
Since for any $v\in \dot H^{1,2}(\R^N)$, $\|\nabla v\|_2^2=\mathrm{tr}A[v]$  
 the inequality between the arithmetic and geometric mean gives $\det A[u]^\frac{1}{N}\le \frac{1}{{N}}\|\nabla(u\circ T)\|_2^2$ for any $v\in \dot H^{1,2}(\R^N)$ and $T\in SL(N)$. By Lemma~\ref{lem:transf} the minimum is attained.
\end{proof}
In view of \eqref{eq:asinf} it is convenient to change the scalar multiple in the definition of the "energy" functional associated with $J_2$. Namely, we introduce
\begin{equation}
E_2(u)\eqdef N\det A[u]^{1/N}=N\omega_N^{2/N}J_2(u)^2.
\end{equation} 
In particular, with such normalization, \eqref{eq:asinf} becomes
\begin{equation}\label{eq:asinf2}
	E_2(u)=\min_{T\in SL(N)}\|\nabla(u\circ T)\|_2^2,
\end{equation}
and $E_2(u)=\|\nabla u\|_2^2$ on all radial functions in $\dot H^{1,2}(\R^N)$. We also introduce an analogous functional $E_{2,\Omega}$ defined on $H^{1,2}(\Omega)$.

\subsection{Proof of the affine Sobolev inequality}
The affine Sobolev inequality \eqref{eq:afS} for $p=2$ can be now easily derived from  the usual Sobolev inequality and \eqref{eq:asinf}:
\begin{equation}\label{eq:proofSob}
\|u\|_{2^*}=\inf_{T\in SL(N)}\|u\circ T\|_{2^*}\le C\inf_{T\in SL(N)}\|\nabla(u\circ T)\|_2=C J_2(u)
\end{equation}
See Appendix for an alternative proof.
\subsection{The affine Laplacian}
Let $\Omega\subset\R^N$ be a domain.
By analogy with the $p$-Laplacian which equals the Frechet derivative of $-\frac{1}{p}\int|\nabla u|^p$, 
we may also define the affine Laplace operator $\Delta_A(u)$ by differentiation of  $-\frac12 E_2$ in a suitable space, e.g. in $\dot H^{1,2}_{0}(\Omega)$ for the Dirichelt affine Laplacian or in $H^{1,2}(\Omega)$ for the Neumann affine Laplacian. Since  
$A_{i,j}[u]'= \left( \int_{\Omega} \nabla_i u \nabla_j u\, \dx{x} \right)' =-2 (\nabla_i \nabla_j u)_{ij}$, we have, formally,  
$$\det A[u]' = \det A[u] \; \,\mathrm{tr} (A^{-1}[u]A[u]') = -2\det A \; \mathrm{tr} (A^{-1}[u]u''),$$ 
where $u''(x)$ is the Hessian of $u$, i.e.  the matrix with components  $\nabla_i \nabla_ju(x)$. Then
\begin{align} 
\Delta_A(u)& = &-\frac{N}2 (\det A[u]^{\frac1N})'  \nonumber \\
 & = &
-\frac12(\det A[u])^{\frac1N-1} (\det A[u])'\nonumber \\
& = & (\det A[u])^{\frac1N} \mathrm{tr} (A^{-1}[u]u'').
\end{align} 
It is easy to see that for any $u\in \dot H_0^{1,2}(\Omega)$ this expression is a Frechet derivative of $-\frac12 E_2$ and that $E_2\in C^1(\dot H_0^{1,2}(\Omega))$.  In what follows the notation $\Delta_A$ will be reserved for the affine Dirichlet Laplacian, that is, for the Frechet derivative above.

We have the following elementary identity:
 \begin{equation}
   \label{eq:affine_Laplacian0}
    \Delta_A (u \circ S)=  \Delta_A (u)\circ S,\; S\in SL(N).
    \end{equation} 
  If $T\in SL(N)$ is as in the last assertion of Lemma~\ref{lem:transf}, i.e. $A[u\circ T]$ is a multiple of identity, then we have
   \begin{equation}
   \label{eq:affine_Laplacian}
    (\Delta_A (u)) \circ {T}=  \Delta (u\circ T).
    \end{equation} 
Consequently, both the strong and the weak maximum principle apply to classical solutions of $\Delta_A (v) =f$, exactly in the same form  as  for the classical Laplacian. 
On the other hand, we have a different comparison principle. 
\begin{proposition}[Comparison principle]
	Let $u_1, \ u_2 \in \dot H^{1,2} (\R^N)$   be classical solutions to 
	\begin{equation}
	\Delta_A (u_i) = f_i, \; \; i=1,2,
	\end{equation}
	in $\R^N$.  
	 Let $T_i \in SL(N)$ be as in the second assertion of Lemma~\ref{lem:transf} relative to $u_i$. 
	If $f_1\circ T_1^{-1} \ge f_2\circ T_2^{-1}$ in $\R^N$, then $u_1\circ T_1^{-1} \le u_2\circ T_2^{-1}$ in $\R^N$. 
\end{proposition}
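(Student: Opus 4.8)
The plan is to reduce the comparison statement to the classical comparison principle for the Laplacian via the transformations $T_1,T_2$ supplied by Lemma~\ref{lem:transf}. First I would set $v_i \eqdef u_i\circ T_i$ for $i=1,2$. Since $A[u_i\circ T_i]$ is a multiple of the identity by the second assertion of Lemma~\ref{lem:transf}, identity \eqref{eq:affine_Laplacian} applies with $T=T_i$, giving $(\Delta_A u_i)\circ T_i = \Delta(u_i\circ T_i) = \Delta v_i$. On the other hand $(\Delta_A u_i)\circ T_i = f_i\circ T_i$ directly from the equation $\Delta_A u_i = f_i$. Combining, $-\Delta v_i = -f_i\circ T_i$, i.e. each $v_i$ is a classical solution of the Poisson equation $-\Delta v_i = g_i$ with $g_i \eqdef f_i\circ T_i$.

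Next I would rewrite the hypothesis in terms of the $v_i$. The assumption $f_1\circ T_1^{-1}\ge f_2\circ T_2^{-1}$ in $\R^N$ is, after composing on the right with the bijection $x\mapsto T_1 T_2^{-1}x$ (or simply observing both sides are functions on $\R^N$ and the inequality is pointwise everywhere), equivalent to $g_1 = f_1\circ T_1 \ge (f_2\circ T_2^{-1})\circ T_1$; but this is not quite $g_2$. The cleaner route: the stated hypothesis is literally $f_1\circ T_1^{-1}(y)\ge f_2\circ T_2^{-1}(y)$ for all $y\in\R^N$, which says $-\Delta$ of $u_1\circ T_1^{-1}$ dominates $-\Delta$ of $u_2\circ T_2^{-1}$ pointwise — so I should instead set $w_i \eqdef u_i\circ T_i^{-1}$. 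By the same argument as above applied with the matrix $S=T_i^{-1}$ in place of $T_i$ (noting $A[u_i\circ T_i^{-1}]$ need not be a multiple of the identity, so I cannot use \eqref{eq:affine_Laplacian} directly) — here the correct tool is \eqref{eq:affine_Laplacian0}: $\Delta_A(u_i\circ T_i^{-1}) = (\Delta_A u_i)\circ T_i^{-1} = f_i\circ T_i^{-1}$. Then composing with $T_i$ and using \eqref{eq:affine_Laplacian}, one gets that $v_i = w_i\circ T_i = u_i\circ T_i^{-1}\circ T_i$... this is getting circular, so let me fix the indexing: with $v_i \eqdef u_i\circ T_i$ as in the first paragraph we have $-\Delta v_i = -f_i\circ T_i$, hence $w_i \eqdef v_i\circ T_i^{-2}$ would be needed — the point is simply that after the orthogonal/unimodular change of variables each $u_i\circ T_i^{-1}$ satisfies a standard Poisson equation whose right-hand side is exactly $f_i\circ T_i^{-1}$, which is what \eqref{eq:affine_Laplacian} gives once we recognize $\Delta_A(u_i)\circ T_i = \Delta(u_i\circ T_i)$ and relabel. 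Concretely: apply \eqref{eq:affine_Laplacian} to conclude $\Delta(u_i\circ T_i) = (\Delta_A u_i)\circ T_i = f_i\circ T_i$, then substitute $y = T_i^{-1}x$ to read this as $-\Delta(u_i\circ T_i^{-1})$ evaluated appropriately; the net effect is that $\phi_i \eqdef u_i\circ T_i^{-1}$ solves $-\Delta \phi_i = -f_i\circ T_i^{-1}$ in $\R^N$ in the classical sense.

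With $\phi_i = u_i\circ T_i^{-1}$ solving $-\Delta\phi_i = -f_i\circ T_i^{-1}$, the hypothesis $f_1\circ T_1^{-1}\ge f_2\circ T_2^{-1}$ becomes $-\Delta(\phi_1-\phi_2) = -(f_1\circ T_1^{-1}) + (f_2\circ T_2^{-1}) \le 0$, so $\phi_1-\phi_2$ is subharmonic on $\R^N$; equivalently $-\Delta\phi_1 \le -\Delta\phi_2$. To conclude $\phi_1\le\phi_2$, i.e. $u_1\circ T_1^{-1}\le u_2\circ T_2^{-1}$, I would invoke the comparison principle for the classical Laplacian — exactly the remark made just before the Proposition that "both the strong and the weak maximum principle apply to classical solutions of $\Delta_A v = f$ in the same form as for the classical Laplacian." Since $\phi_1,\phi_2\in\dot H^{1,2}(\R^N)$ (as $u_i\in\dot H^{1,2}$ and composition with $T_i^{-1}\in SL(N)$ preserves this space), both tend to $0$ "at infinity" in the relevant sense, so the subharmonic function $\phi_1-\phi_2$ with nonpositive boundary data at infinity is $\le 0$ everywhere by the maximum principle on $\R^N$ for functions in $\dot H^{1,2}$.

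The main obstacle is the bookkeeping with the two different transformations $T_1,T_2$: unlike a one-function maximum principle, here the two equations are straightened by \emph{different} unimodular changes of variables, so one must be careful that identity \eqref{eq:affine_Laplacian} is applied to each $u_i$ with its own $T_i$ before the inequality between the $f_i$ is used, and that the final comparison is between the already-transformed functions $u_i\circ T_i^{-1}$ rather than between $u_1,u_2$ themselves — indeed no comparison between $u_1$ and $u_2$ directly can hold, which is the whole content of "a different comparison principle." The only analytic subtlety is justifying the maximum principle on all of $\R^N$ (rather than on a bounded domain with prescribed boundary values): this uses the decay of $\dot H^{1,2}(\R^N)$ functions, and is precisely the point already asserted in the text preceding the Proposition, so I would simply cite it.
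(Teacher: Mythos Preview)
Your strategy is exactly the paper's: use \eqref{eq:affine_Laplacian} to turn each equation $\Delta_A u_i=f_i$ into an ordinary Poisson equation after composing with the unimodular $T_i$, subtract, and invoke the classical maximum principle. The paper's proof is literally the one line
\[
\Delta\bigl(u_1\circ T_1^{-1}-u_2\circ T_2^{-1}\bigr)=f_1\circ T_1^{-1}-f_2\circ T_2^{-1},
\]
followed by ``apply the maximum principle for the usual Laplacian.''

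Your long detour through $v_i$, $w_i$, $\phi_i$ is caused by a genuine notational slip in the statement (and in the paper's own proof): Lemma~\ref{lem:transf} produces $T_i$ with $A[u_i\circ T_i]$ a scalar multiple of the identity, so \eqref{eq:affine_Laplacian} gives $\Delta(u_i\circ T_i)=f_i\circ T_i$, not the version with $T_i^{-1}$. The Proposition should simply read with $T_i$ in place of $T_i^{-1}$ throughout; then your first paragraph is already the complete argument. Your attempted repair --- ``substitute $y=T_i^{-1}x$ to read this as $-\Delta(u_i\circ T_i^{-1})$'' --- is the one actual error in your write-up: the Laplacian is not invariant under a non-orthogonal linear change of variables, so one cannot pass from $\Delta(u_i\circ T_i)=f_i\circ T_i$ to $\Delta(u_i\circ T_i^{-1})=f_i\circ T_i^{-1}$ that way, and the latter identity is false in general. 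Drop that paragraph, accept the $T_i\leftrightarrow T_i^{-1}$ correction, and you have the paper's proof.
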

 \begin{proof}
By \eqref{eq:affine_Laplacian}, 
$$\Delta(u_1\circ T_1^{-1} - u_2\circ T_2^{-1})=f_1\circ T_1^{-1} - f_2\circ T_2^{-1},$$ 
and apply the maximum principle for the usual Laplacian. 
\end{proof}

\subsection{Friedrichs and Poincar\'e inequalities}
The Friederichs inequality for the affine Sobolev norm follows from the 
following elementary consequence of the first fundamental theorem of calculus. Let $\Omega\subset\R^N$ be a bounded convex domain. For each $u\in C_0^\infty(\Omega)$,
\begin{equation}
\int_\Omega |u|^2 \dx{x}\le 
C \int_\Omega |\nabla_i u|^2 \dx{x},
\end{equation}
$i=1,\dots, N$. Assuming that $u$ has a support in a subset of $\Omega$ we may drop the requirement of convexity.
Due to Lemma~\ref{lem:transf}  we may assume without loss of generality that $A[u]$ is a diagonal matrix. Taking the product over $i$ in the inequality above gives 
$$\|u\|_{2,\Omega}\le CJ_{2,\Omega}(u).$$

An immediate analog of the Poincar\'e inequality,
 $$J_{2,\Omega}(u)^2+\left(\int_\Omega u\right)^2\ge C\|u\|_{2,\Omega}^2,$$ 
 is false, since the left hand side will vanish on any nonzero function dependent only on $x_1$, whose integral over $\Omega$ is zero.
\section{Some variational problems}
In what follows the norm of a matrix $T$ will be denoted as $|T|$. We note that a sequence $(T)_k\subset SL(N)$ is either unbounded in norm, or has a  subsequence convergent to a matrix in $SL(N)$. 
\subsection{Affine Laplace equation}
\begin{definition}
We shall say that a function $f\in L^{\frac{2N}{N+2}}(\Omega)$ is of class $L_A(\Omega)$ if for any sequence $(T_k)\in SL(N)$, $|T_k|\to\infty$, one has\\ $f\circ T_k|_\Omega\to 0$ in $L^{\frac{2N}{N+2}}(\Omega)$.
\end{definition}
In particular, if $\Omega$ is bounded,  $L_A(\Omega)=L^{\frac{2N}{N+2}}(\Omega)$, and if $\Omega=\R^N$, $L_A(\Omega)=\{0\}$. 
\begin{theorem}
\label{thm:Laplace}	Let $\Omega \subset \R^N$ be a domain with a piecewise-$C^1$ boundary.  If $f\in L_A(\Omega)$, then the infimum
\begin{equation}
	\label{eq:Lmin}
	\kappa_f \eqdef	\inf_{u \in \dot H_{0}^{1,2}(\Omega)}\frac12 E_2(u) -\int_{\Omega} f(x)u(x)\dx{x}	
	\end{equation}
is attained.	
If, additionally, $f\in L^2(\Omega)$, then this minimizer is a classical solution of
	\begin{equation}
	\label{eq:LE}
\Delta_A(u)(x)+f(x)=0,\;x \in \Omega.
\end{equation}
	
\end{theorem}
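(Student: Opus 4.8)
The plan is to run the direct method of the calculus of variations on the functional $I(u)\eqdef\frac12 E_2(u)-\int_\Omega f u$, using the $SL(N)$-reduction \eqref{eq:asinf2} to trade the nonconvex functional $E_2$ for a genuine Dirichlet integral after a change of variables. First I would check that $I$ is bounded below and coercive on $\dot H^{1,2}_0(\Omega)$: by \eqref{eq:asinf2}, $E_2(u)=\|\nabla(u\circ T_u)\|_2^2$ for some $T_u\in SL(N)$, and since $\|\cdot\|_{2^*}$ is $SL(N)$-invariant, the Sobolev inequality gives $E_2(u)\ge C\|u\|_{2^*}^2$; combined with $f\in L^{2N/(N+2)}(\Omega)$ and Hölder, $\int_\Omega fu\le \|f\|_{2N/(N+2)}\|u\|_{2^*}\le \epsilon E_2(u)+C_\epsilon$, so $I(u)\ge \frac14 E_2(u)-C$ and $\kappa_f>-\infty$. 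Take a minimizing sequence $(u_k)$; then $E_2(u_k)$ is bounded, hence $\|u_k\|_{2^*}$ is bounded, but \emph{not} necessarily $\|\nabla u_k\|_2$.

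The key device is to replace $u_k$ by $v_k\eqdef u_k\circ T_k$ where $T_k\in SL(N)$ is the matrix from Lemma~\ref{lem:transf} that makes $A[v_k]=\det A[u_k]\,I$; then $\|\nabla v_k\|_2^2=E_2(v_k)=E_2(u_k)$ is bounded, so $(v_k)$ is bounded in $\dot H^{1,2}(\R^N)$ (or in $\dot H^{1,2}_0$ of the rotated-and-sheared domain $T_k^{-1}\Omega$). Here I must split into two cases according to the remark preceding the theorem. If $(T_k)$ has a subsequence converging to some $T_\infty\in SL(N)$, then along that subsequence $v_k\rightharpoonup v$ weakly in $\dot H^{1,2}_0(\Omega_k)$ with $\Omega_k=T_k^{-1}\Omega\to T_\infty^{-1}\Omega$; undoing the change of variables, $u_k\rightharpoonup u\eqdef v\circ T_\infty^{-1}$ weakly, the quadratic forms $A[\cdot]$ are weakly lower semicontinuous (each entry $A_{ij}$ is, up to the polarization identity, a sum of norms of $\partial_i u$), hence $\det A[\cdot]^{1/N}$ is weakly l.s.c.\ as well because it is the trace-normalized minimum in \eqref{eq:asinf2} of a weakly l.s.c.\ family — more cleanly, $E_2(u)=\min_T\|\nabla(u\circ T)\|_2^2\le \liminf\|\nabla(u_k\circ T)\|_2^2$ for the fixed optimal $T$ of the limit, and this liminf is controlled since after composing with $T$ the gradients still converge weakly. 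The linear term $\int_\Omega fu_k\to\int_\Omega fu$ passes to the limit by the compact embedding $\dot H^{1,2}_0(\Omega)\hookrightarrow L^{2}_{loc}$ paired with $f\in L^{2N/(N+2)}$ (this is where piecewise-$C^1$ boundary enters), giving $I(u)\le\liminf I(u_k)=\kappa_f$, so $u$ is a minimizer.

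The hard part — and the reason the hypothesis $f\in L_A(\Omega)$ is imposed — is the case $|T_k|\to\infty$. Here the claim must be that this case cannot carry the full infimum, or rather that it degenerates: when $|T_k|\to\infty$ the linear term $\int_\Omega f u_k=\int_{\Omega_k}(f\circ T_k)v_k$, and since $f\in L_A(\Omega)$ we have $f\circ T_k|_\Omega\to 0$ in $L^{2N/(N+2)}$, so by Hölder $\int_\Omega f u_k\to 0$; then $\kappa_f=\lim I(u_k)=\lim(\frac12 E_2(u_k)-o(1))\ge 0$. But testing \eqref{eq:Lmin} with $u=0$ gives $\kappa_f\le 0$, and with $u=tw$ for small $t\ne0$ and any $w$ with $\int fw\ne 0$ (available unless $f=0$, in which case $u=0$ is trivially a minimizer) gives $\kappa_f<0$, a contradiction. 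Hence the minimizing sequence, possibly after discarding finitely many terms, stays in the regime where $(T_k)$ is bounded, and we are back in the first case. Finally, once a minimizer $u$ is obtained, since $E_2\in C^1(\dot H^{1,2}_0(\Omega))$ with derivative $-\Delta_A$ (as computed in the excerpt), the Euler–Lagrange equation reads $\Delta_A(u)+f=0$ in the weak sense; assuming further $f\in L^2$, pass through the rotation $T$ with $A[u\circ T]=\det A[u]I$ so that by \eqref{eq:affine_Laplacian} the equation becomes $\det A[u]^{1/N}\Delta(u\circ T)+f\circ T=0$, a linear uniformly elliptic (indeed, after rescaling, Poisson) equation with $L^2$ right-hand side, to which standard elliptic regularity applies, yielding a classical solution; composing back with $T^{-1}$ gives the conclusion for $u$.
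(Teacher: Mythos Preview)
Your proposal is correct and follows essentially the same route as the paper: pass to $v_k=u_k\circ T_k$ with $T_k$ the optimal matrix so that $\|\nabla v_k\|_2^2=E_2(u_k)$ is bounded, rule out the case $|T_k|\to\infty$ by combining $f\in L_A(\Omega)$ with the observation $\kappa_f<0$, and in the remaining case $T_k\to T$ take the weak limit and invoke lower semicontinuity and elliptic regularity. The paper is terser---it works directly with $v_k$ and the Dirichlet integral rather than arguing weak l.s.c.\ of $E_2$ at the level of $u_k$, and it attributes the piecewise-$C^1$ hypothesis to keeping the limit in $\dot H^{1,2}_0(\Omega)$ rather than to the linear term---but the logic is the same.
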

\begin{proof}
Note first that $\kappa_f<0$. Indeed, let $w\in C_0^1(\Omega)$ be such that $\int_\Omega fw\dx{x}<0$. Then for $t>0$ sufficiently small the functional in \eqref{eq:Lmin} will have negative values, since the first term is quadratic in $t$.

By \eqref{eq:asinf2} we can rewrite \eqref{eq:Lmin} as
\begin{equation}
	\label{eq:Lmin1}
	\kappa_f =	\inf_{v \in \dot H_{0}^{1,2}(T\Omega), \, T\in SL(N)} \frac{1}{2}\int_{T\Omega}|\nabla v|^2\dx{x} -\int_{\Omega} f(x)\,v(Tx)dx{x}.	
	\end{equation}

Let $((v_k, T_k))_{k\in\N} \subset C_{0}^{\infty}(T_k\Omega)\times SL(N)$ be a minimizing sequence for \eqref{eq:Lmin1}. Consider $(v_k)$ as a sequence in $\dot H^{1,2}(\R^N)$. 
Assume first that $|T_k|\to\infty$. Then, since $f\in L_A(\Omega)$, we have $\kappa_f\ge 0$, which is false.  Consequently, we have, on a renamed subsequence, $T_k\to T\in SL(N)$ and $v_k\rightharpoonup v$ in $\dot H^{1,2}(\R^N)$ with $v=0$ outside of $T\Omega$, which means that $u\eqdef v\circ T^{-1}\in \dot H_{0}^{1,2}(\Omega)$ is a required minimizer.  
Equation \eqref{eq:LE} (in the weak sense) follows, and the regularity of the solution is a consequence of the standard elliptic regularity.
\end{proof}
\begin{remark}
In absence of a simple comparison principle we have no immediate uniqueness theorem.
\end{remark}

\subsection{Isoperimetric problems, the case of $\R^N$}

\begin{theorem}
	The minimal values in the problems
	\begin{equation}
	\label{eq:crit}
		\inf_{u \in \dot H^{1,2}(\R^N), \|u\|_{2^*} =1} E_2(u),
	\end{equation}
	and 
		\begin{equation}
		\label{eq:subcrit0}
			\inf_{u \in H^{1,2}(\R^N), \|u\|_{p} =1}E_2(u)+\|u\|_2^2, \,2<p<2^*,
		\end{equation}
	are attained.
\end{theorem}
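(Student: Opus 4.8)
The plan is to reduce both problems to their classical analogues by combining the identity \eqref{eq:asinf2}, $E_2(u)=\min_{T\in SL(N)}\|\nabla(u\circ T)\|_2^2$, with the fact that every $L^q$--norm entering the constraints or the functionals is unchanged under $u\mapsto u\circ T$ for $T\in SL(N)$: since $|\det T|=1$ one has $\int_{\R^N}|u(Tx)|^q\,\dx{x}=\int_{\R^N}|u(y)|^q\,\dx{y}$ for every $q\in[1,\infty)$. Set $S\eqdef\inf\{\|\nabla v\|_2^2:v\in\dot H^{1,2}(\R^N),\ \|v\|_{2^*}=1\}$, the sharp Sobolev constant, and $m_p\eqdef\inf\{\|\nabla v\|_2^2+\|v\|_2^2:v\in H^{1,2}(\R^N),\ \|v\|_p=1\}$.

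First I would show that the infimum in \eqref{eq:crit} equals $S$ and the one in \eqref{eq:subcrit0} equals $m_p$. Taking $T=\id$ in \eqref{eq:asinf2} gives $E_2(v)\le\|\nabla v\|_2^2$ for all $v$, hence the two affine infima are $\le S$ and $\le m_p$ respectively. Conversely, given $u$ with $\|u\|_{2^*}=1$, pick $T\in SL(N)$ as in Lemma~\ref{lem:transf} so that $E_2(u)=\|\nabla(u\circ T)\|_2^2$; since $\|u\circ T\|_{2^*}=1$ this is $\ge S$, so $E_2(u)\ge S$. Using in addition $\|u\circ T\|_2=\|u\|_2$ we get, for every $u$ with $\|u\|_p=1$, that $E_2(u)+\|u\|_2^2=\|\nabla(u\circ T)\|_2^2+\|u\circ T\|_2^2\ge m_p$. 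This same computation shows that every classical extremal is already an affine one: if $v$ realizes $S$ then $E_2(v)\le\|\nabla v\|_2^2=S$ and $E_2(v)\ge S$, so $v$ minimizes in \eqref{eq:crit}; likewise a minimizer of $m_p$ minimizes in \eqref{eq:subcrit0}.

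For \eqref{eq:crit} it therefore suffices to exhibit a single Sobolev extremal, and the Aubin--Talenti function $U(x)=c_N(1+|x|^2)^{-(N-2)/2}$, normalized so that $\|U\|_{2^*}=1$, works: being radial, $E_2(U)=\|\nabla U\|_2^2=S$ by the remark after \eqref{eq:asinf2}. (Equivalently, $E_2\ge S$ on $\{\|u\|_{2^*}=1\}$ is just the sharp affine Sobolev inequality from \eqref{eq:proofSob}.) For \eqref{eq:subcrit0} the only genuinely analytic point is that $m_p$ is attained, which is not automatic because $H^{1,2}(\R^N)\hookrightarrow L^p(\R^N)$ fails to be compact. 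Here I would take a minimizing sequence $(v_k)$ with $\|v_k\|_p=1$ and $\|v_k\|_{H^{1,2}}^2\to m_p$ (so $(v_k)$ is bounded in $H^{1,2}$), replace each $v_k$ by the Schwarz symmetrization of $|v_k|$ — which keeps $\|v_k\|_p$ and $\|v_k\|_2$ fixed and, by the P\'olya--Szeg\H{o} inequality, does not increase $\|\nabla v_k\|_2$, so the new sequence is still minimizing and radially decreasing — and then apply the compact embedding $H^{1,2}_{\mathrm{rad}}(\R^N)\hookrightarrow L^p(\R^N)$ (valid for $2<p<2^*$) to extract a subsequence converging in $L^p$ and weakly in $H^{1,2}$ to some $v$ with $\|v\|_p=1$. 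Weak lower semicontinuity of the $H^{1,2}$--norm gives $\|v\|_{H^{1,2}}^2\le m_p$, so $v$ attains $m_p$, and by the previous paragraph $v$ also minimizes in \eqref{eq:subcrit0}. Alternatively one could invoke Lions' concentration--compactness for the measures $|v_k|^p\,\dx{x}$: vanishing is excluded since $\|v_k\|_p=1$ with $(v_k)$ bounded in $H^{1,2}$, and dichotomy is excluded by the strict subadditivity of $\lambda\mapsto\inf\{\|v\|_{H^{1,2}}^2:\|v\|_p^p=\lambda\}$, which holds because $2/p<1$; only compactness up to translations survives.

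The main obstacle is thus isolated in this last step — recovering compactness of a minimizing sequence for $m_p$ against the translation invariance of $\R^N$. Everything else is the algebraic reduction above together with the classical existence of extremals for the Sobolev quotient and for the subcritical energy; one could equally derive the attainment of $m_p$ from the profile decomposition for $H^{1,2}(\R^N)$ recalled in the Appendix, but the symmetrization route keeps the argument self-contained.
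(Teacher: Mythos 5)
Your proposal is correct and follows essentially the same route as the paper: both reduce the affine problems to their classical counterparts via \eqref{eq:asinf2} together with the $SL(N)$-invariance of the $L^q$-norms, and then invoke the Talenti--Bliss extremal for \eqref{eq:crit} and the radial minimizer for \eqref{eq:subcrit0}. The only difference is that you supply the standard symmetrization/compact-radial-embedding argument for the attainment of the subcritical infimum, which the paper simply cites as well known.
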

The first part of the theorem was proved in \cite{Zhang-p} for the affine p-Laplacian, but for the case the case $p=2$ there is an elementary proof that we include here.  
\begin{proof}
	By \eqref{eq:asinf2}, for every $u \in \dot H^{1,2}(\R^N)$ there is $T \in SL(N)$ such that  $E_2(u)= \| \nabla (u \circ T) \|_2^2$. Therefore 
\begin{equation}
\inf_{u \in \dot H^{1,2}(\R^N), \|u\|_{2^*} =1} E_2(u) = \inf_{u \in \dot H^{1,2}(\R^N), \|u\|_{2^*} =1} \| \nabla u \|_2^2
\end{equation}
and 
\begin{equation}
\inf_{u \in H^{1,2}(\R^N), \|u\|_{p} =1}E_2(u)+\|u\|_2^2=
\inf_{u \in H^{1,2}(\R^N), \|u\|_{p} =1}\| \nabla u \|_2^2+\|u\|_2^2.
\end{equation}
Thus the infimum \eqref{eq:crit} is uniquely attained at the Talenti-Bliss minimizer under any combined action of dilations and affine transformations, and the infimum \eqref{eq:subcrit0} is attained at well-known unique radial minimizer under any combined action of translations and affine transformations.
\end{proof}

\subsection{Affine-null domains and compactness in $L^{p}$}
In what follows $|\Omega|$ will denote the Lebesgue measure of a set.
Recall the definition of the lower limit for a sequence $(X_k)$ of sets:
\[
\liminf X_k\eqdef\bigcup_{n\in\N}\bigcap_{k\ge n}X_k.
\]
\begin{definition}
	A subset $\Omega$ of $\R^N$ will be called affine-null set if  for any sequences $(T_k)\subset SL(N)$ and $(y_k)\subset\Z^N$, such that $|T_k|+|y_k|\to\infty$,  
	\begin{equation}
	\label{eq:no_profiles}
		|\liminf T_k^{-1}(\Omega-y_k)|=0.
	\end{equation}

\end{definition}
Note that any bounded set is affine-null.   An example of an unbounded affine null set is $\{ (x_1,\bar x) \in \R \times \R^{N-1} : | \bar x  | < e^{-x_1^2}\}$. Not every null set relative to the group of shifts alone (i.e. $\forall (y_k)\subset\R^N$ $|\liminf(\Omega-y_k)|=0$) is affine-null. In particular, the set $\{ (x_1,\bar x) \in \R \times \R^{N-1} : | \bar x  | < (1+\log {|x_1|})^{-1}\}$ is shifts-null but not affine-null.
 
\begin{theorem} 
	\label{thm:null-compact} 
	Let $\Omega\subset\R^N$ be an affine-null domain $[$for example, a bounded domain$]$.
Then the set $B_1=\{u\in H^{1,2}_{0}(\Omega);\;E_2(u)\le 1\}$ is relatively compact in $L^p(\Omega)$, $2< p< 2^*$.  
\end{theorem}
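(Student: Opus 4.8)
The key difficulty is that $E_2$ is not coercive on $H^{1,2}_0(\Omega)$ — a bounded-$E_2$ sequence need not be bounded in $H^{1,2}$, because the minimizing matrix $T\in SL(N)$ in \eqref{eq:asinf2} can degenerate. So the standard Rellich--Kondrachov argument does not apply directly. The plan is to reduce to the classical case by tracking the optimal transformations, and then to rule out "escape to infinity" in $SL(N)$ using the affine-null hypothesis, which is exactly what \eqref{eq:no_profiles} was designed to prevent.

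First I would let $(u_k)\subset B_1$ be an arbitrary sequence; by \eqref{eq:asinf2} choose $T_k\in SL(N)$ with $\|\nabla(u_k\circ T_k)\|_2^2 = E_2(u_k)\le 1$, and set $v_k\eqdef u_k\circ T_k\in \dot H^{1,2}_0(T_k^{-1}\Omega)$, extended by zero to all of $\R^N$, so that $(v_k)$ is bounded in $\dot H^{1,2}(\R^N)$. Since $\mathrm{supp}\,v_k\subset T_k^{-1}\Omega$ and we want compactness in $L^p(\Omega)$ with $2<p<2^*$, the natural tool is the profile decomposition for $\dot H^{1,2}(\R^N)$ (cited in the Appendix, and the whole point of Section 4): on a subsequence, $v_k = \sum_n g_k^{(n)} w^{(n)} + r_k$ where the $g_k^{(n)}$ are the rescaling-translation group actions (dilations composed with shifts $y\mapsto y + b_k^{(n)}$), the $w^{(n)}$ are the profiles, and $r_k\to 0$ in $L^p(\R^N)$ for every $p\in(2,2^*)$. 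It suffices to show all profiles $w^{(n)}$ vanish, for then $\|v_k\|_{L^p}\to 0$, hence $\|u_k\|_{L^p(\Omega)} = \|v_k\circ T_k^{-1}\|_{L^p(\Omega)}$ — wait, here I must be careful: $\|u_k\|_{L^p(\Omega)}^p = \int_\Omega |v_k(T_k^{-1}x)|^p\,dx = \int_{T_k^{-1}\Omega}|v_k|^p\,dy$ since $\det T_k^{-1}=1$, so $\|u_k\|_{L^p(\Omega)} = \|v_k\|_{L^p(\R^N)}\to 0$, giving relative compactness (the limit being $0$ along this subsequence, and since the original sequence was arbitrary, $B_1$ is relatively compact — actually we'd conclude every sequence has a subsequence converging to $0$ in $L^p$, so $B_1$ is relatively compact with $\{0\}$ its only limit point, which is fine).

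To kill the profiles: suppose some $w^{(n)}\ne 0$. The corresponding concentration is along dilation parameters $t_k^{(n)}>0$ and shifts $b_k^{(n)}\in\R^N$; the support constraint $\mathrm{supp}\,v_k\subset T_k^{-1}\Omega$ forces, after undoing the $n$-th group action, that a fixed set of positive measure (any set where $w^{(n)}\ne 0$ essentially) lies, up to the rescaling, inside $t_k^{(n)}\big(T_k^{-1}\Omega - b_k^{(n)}\big)$ — i.e. inside $(t_k^{(n)}T_k^{-1})(\Omega - b_k^{(n)}/t_k^{(n)})$ for all large $k$ (modulo null sets and a weak-convergence argument: the weak limit of the rescaled, shifted $v_k$ is $w^{(n)}$, and weak limits respect the support constraint in the sense that $w^{(n)}$ must vanish a.e. outside $\liminf$ of the rescaled shifted domains). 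Now $t_k^{(n)}T_k^{-1}\in \mathrm{SL}(N)$ up to a scalar — more precisely $(t_k^{(n)})^N\det T_k^{-1} = (t_k^{(n)})^N$, so this isn't quite in $SL(N)$; the clean way is: the rescaling group in the profile decomposition for $\dot H^{1,2}$ consists of $u\mapsto t^{(N-2)/2} u(t(\cdot) - b)$, and combining with $T_k^{-1}\in SL(N)$, the support of the $n$-th normalized function sits in $S_k\eqdef t_k^{(n)} T_k^{-1}(\Omega - b_k^{(n)})$. If $(t_k^{(n)} T_k^{-1})$ together with a lattice approximation $y_k$ of $t_k^{(n)} b_k^{(n)}$ stays in a regime where $|S_k^{-1}\cdot|$-type norms blow up, the affine-null condition \eqref{eq:no_profiles} gives $|\liminf S_k| = 0$, contradicting $w^{(n)}\ne 0$. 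The remaining case is that $(t_k^{(n)}T_k^{-1})$ and the shift stay bounded, i.e. after passing to a subsequence $t_k^{(n)}T_k^{-1}\to S_\infty\in GL(N)$ and $t_k^{(n)}b_k^{(n)}\to b_\infty$; but then the $n$-th profile is obtained from $v_k$ by a convergent sequence of transformations, so $w^{(n)}$ is (a transform of) the weak $\dot H^{1,2}$-limit of $v_k$ restricted to a fixed bounded-geometry domain, and the classical Rellich theorem on that domain — i.e. local compactness of $\dot H^{1,2}(\R^N)\hookrightarrow L^p_{\mathrm{loc}}$ combined with the requirement that $w^{(n)}$ be a genuine (orthogonal, non-vanishing) profile — forces $w^{(n)}$ to be the trivial profile already accounted for, or forces a contradiction with the dichotomy structure of the profile decomposition.

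The main obstacle I expect is the bookkeeping in the last paragraph: correctly matching the rescaling group of the $\dot H^{1,2}$-profile decomposition with the $SL(N)$-conjugation, so that the composite transformations land in the exact framework where Definition of affine-null ($(T_k)\subset SL(N)$, $(y_k)\subset\Z^N$, $|T_k|+|y_k|\to\infty$) applies — in particular extracting a genuine $SL(N)$-part and a genuine lattice-shift part from the product of a dilation, a real shift, and an $SL(N)$ matrix, and handling the interaction between the dilation factor and the determinant normalization. A secondary subtlety is justifying that weak $\dot H^{1,2}$-limits inherit the support restriction to $\liminf$ of the (rescaled, shifted) domains; this is standard but must be stated, e.g. via testing against $C_0^\infty$ functions supported outside $\liminf$. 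Everything else — boundedness of $(v_k)$, the scaling identity $\|u_k\|_{L^p(\Omega)}=\|v_k\|_{L^p(\R^N)}$, the reduction to vanishing profiles — is routine given Lemma~\ref{lem:transf}, its Corollary, and the profile decomposition machinery of Section 4 and the Appendix.
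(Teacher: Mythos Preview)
Your plan has the right architecture but contains a logical error and leaves unresolved the very obstacle you flag; the paper's proof sidesteps both via a simpler route.

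\textbf{The logical error.} You assert it suffices to show all profiles $w^{(n)}$ vanish, concluding that every sequence in $B_1$ has a subsequence converging to $0$ in $L^p$ (``$B_1$ is relatively compact with $\{0\}$ its only limit point, which is fine''). This is false: the constant sequence $u_k \equiv u_0$ for any nonzero $u_0 \in B_1$ has no subsequence tending to $0$. Relative compactness requires a convergent subsequence with a possibly \emph{nonzero} limit. You half-recognize this in your case analysis --- when the composite transformation stays bounded you observe that $w^{(n)}$ is a transform of the weak limit of $v_k$ --- but then dismiss it as ``already accounted for'' without having accounted for it anywhere. That profile \emph{is} the limit and must be kept, not killed.

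\textbf{The unresolved obstacle.} The composite map $x\mapsto t_k^{(n)} T_k^{-1} x$ has determinant $(t_k^{(n)})^N$, not $1$, so it lies outside $SL(N)$ and the affine-null hypothesis \eqref{eq:no_profiles} does not apply as written. You correctly identify this as the main difficulty but offer no resolution.

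\textbf{How the paper proceeds.} The paper avoids both problems by (i) passing to a subsequence and splitting into the two cases $|T_k|\to\infty$ versus $T_k\to T\in SL(N)$, and (ii) applying the \emph{shifts-only} profile decomposition of Proposition~\ref{acc-shifts} to $(v_k)$ in $H^{1,2}(\R^N)$ rather than the dilations-and-shifts decomposition in $\dot H^{1,2}(\R^N)$. With only lattice shifts in play, condition \eqref{eq:no_profiles} applies verbatim. If $|T_k|\to\infty$, then for every $(y_k)$ one has $|\liminf T_k^{-1}(\Omega - y_k)|=0$, forcing every translated weak limit of $v_k$ to vanish, whence $v_k\to 0$ in $L^p$ and $u_k\to 0$. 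If $T_k\to T$, then $v_k\rightharpoonup v$; the affine-null condition (now with bounded $T_k$ and $|y_k|\to\infty$) kills the remaining shift-profiles of $v_k-v$, giving $v_k\to v$ in $L^p$ and hence $u_k\to v\circ T^{-1}$.

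Using Proposition~\ref{acc-shifts} requires $\|v_k\|_2=\|u_k\|_2$ to be bounded; this follows (at least when $|\Omega|<\infty$, e.g.\ for bounded $\Omega$) from the affine Sobolev bound $\|u_k\|_{2^*}\le C\,E_2(u_k)^{1/2}\le C$ together with H\"older, $\|u_k\|_{2,\Omega}\le |\Omega|^{1/N}\|u_k\|_{2^*,\Omega}$. Once you have this $L^2$ bound there is no reason to bring dilations into the picture at all, and the bookkeeping problem you anticipated simply does not arise.
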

Note that the set $B_1$ is not bounded in $H^{1,2}_{0}(\Omega)$.
\begin{proof}
	Let $(u_k) \subset B_1$ and consider it as a sequence in $H^{1,2}(\R^N)$.
	Let $T_k\in SL(N)$ be as in \eqref{eq:qq}.
	Let $v_k=u_k \circ T_k$. Then $(v_k)$ is a bounded sequence in $H^{1,2}_{0}(\Omega)$, which we will consider as a sequence in 
	$H^{1,2}(\R^N)$. If $|T_k| \to \infty$ then by \eqref{eq:no_profiles}, $v_k(\cdot-y_k)\rightharpoonup 0$  in $H^{1,2}(\R^N)$ for any sequence $(y_k)\subset\R^N$ (for details see the argument in the proof of Lemma~4.1 in \cite{FT}), which implies (e.g. by Proposition~\ref{acc-shifts}) that $v_k\to 0$ in $L^p$, $2<p<2^*$, and thus $u_k\to 0$ in $L^p$. 
   Otherwise, there is a renamed subsequence of $(T_k)$ convergent to some $T\in SL(N)$. Passing again to a renamed weakly convergent subsequence we may assume that $v_k\rightharpoonup v$ in $H^{1,2}(\R^N)$,
   and thus $u_k\rightharpoonup  v\circ T^{-1}$ in $H_0^{1,2}(\Omega)$. On the other hand, from \eqref{eq:no_profiles} we can infer that for any sequence $(y_k)\subset\R^N$, $(v_k-v)(\cdot-y_k)\rightharpoonup 0$ in $H^{1,2}(\R^N)$ and thus, setting $u\eqdef  v\circ T^{-1}$, $\|u_k-u\|_p\le\|v_k-v\|_p+\|u\circ T -u\circ T_k\|_p\to 0$. 
	\end{proof}
\subsection{A semilinear problem in an affine null domain}
\begin{theorem}\label{thm:bddDom} 
	Let $\Omega \subset \R^N$ be an affine-null  domain $[$for example, a bounded domain$]$ with a piecewise-$C^1$-boundary.  Then the minimum in the problem
	\begin{equation}\label{eq:bddDom} 
\kappa_p =	\inf_{u \in H_{0}^{1,2}(\Omega), \|u\|_{p,\Omega} =1} E_2(u),\;2<p<2^*,
	\end{equation}
	is attained.
\end{theorem}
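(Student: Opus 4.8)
The plan is to minimize along a minimizing sequence by exploiting the identity \eqref{eq:asinf2}, which replaces the affine functional $E_2$ by the ordinary Dirichlet integral at the cost of an extra $SL(N)$-parameter. Concretely, by \eqref{eq:asinf2} problem \eqref{eq:bddDom} is equivalent to
\begin{equation*}
\kappa_p=\inf\Bigl\{\|\nabla v\|_2^2:\ T\in SL(N),\ v\in H_0^{1,2}(T\Omega),\ \|v\circ T\|_{p,\Omega}=1\Bigr\},
\end{equation*}
since changing variables by $v=u\circ T$ gives $\|v\|_{p,T\Omega}=\|u\circ T\|_{p,\Omega}$ (note $T$ unimodular, so the $L^p$ norm transforms by a composition without Jacobian factor — here $\|v\circ T\|_{p,\Omega}=\|v\|_{p,T\Omega}$). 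First I would take a minimizing pair sequence $((v_k,T_k))$ with $v_k\in H_0^{1,2}(T_k\Omega)$, $\|v_k\|_{p,T_k\Omega}=1$, and $\|\nabla v_k\|_2^2\to\kappa_p$; in particular $(v_k)$ is bounded in $\dot H^{1,2}(\R^N)$, and since each $v_k$ vanishes outside $T_k\Omega$ with $\|v_k\|_{p}=1$, it is bounded in $H^{1,2}(\R^N)$ as well (on the support the full $H^1$ norm is controlled by the gradient via the Friedrichs-type inequality of Section 2.5, up to the $SL(N)$ twist — but more robustly, one notes $2<p<2^*$ and interpolation/Sobolev keeps things bounded).

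The key dichotomy is on $(|T_k|)$. If $|T_k|\to\infty$, I would argue exactly as in the proof of Theorem~\ref{thm:null-compact}: the affine-null property \eqref{eq:no_profiles} forces $v_k(\cdot-y_k)\rightharpoonup 0$ in $H^{1,2}(\R^N)$ for every sequence $(y_k)\subset\R^N$, hence by the cocompactness of the embedding $H^{1,2}(\R^N)\hookrightarrow L^p(\R^N)$ with respect to shifts (Proposition~\ref{acc-shifts}) we get $v_k\to 0$ in $L^p$, contradicting $\|v_k\|_p=1$. Therefore $(|T_k|)$ is bounded, so along a renamed subsequence $T_k\to T\in SL(N)$ and $v_k\rightharpoonup v$ in $H^{1,2}(\R^N)$ with $\supp v\subset\overline{T\Omega}$, i.e. $u\eqdef v\circ T^{-1}\in H_0^{1,2}(\Omega)$.

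It then remains to verify that $u$ is admissible and optimal. For admissibility I need $\|v_k\|_p\to\|v\|_p$, i.e. $\|v\|_{p,T\Omega}=1$: using again \eqref{eq:no_profiles}, the difference $(v_k-v)(\cdot-y_k)\rightharpoonup 0$ for all $(y_k)$, so $v_k-v\to 0$ strongly in $L^p(\R^N)$, giving $\|v\|_p=\lim\|v_k\|_p=1$ and thus $\|u\|_{p,\Omega}=\|v\circ T\|_{p,\Omega}=\|v\|_{p,T\Omega}=1$. For optimality, weak lower semicontinuity of $v\mapsto\|\nabla v\|_2^2$ gives $\|\nabla v\|_2^2\le\liminf\|\nabla v_k\|_2^2=\kappa_p$, while by \eqref{eq:asinf2}, $E_2(u)\le\|\nabla(u\circ T)\|_2^2=\|\nabla v\|_2^2\le\kappa_p$; since $u$ is admissible, $E_2(u)\ge\kappa_p$, so $E_2(u)=\kappa_p$ and the infimum is attained. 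The main obstacle I anticipate is the first dichotomy branch: justifying carefully that the affine-null condition is exactly strong enough to kill all weak limits of the twisted-and-shifted sequence $v_k(\cdot-y_k)$ — this is the step that genuinely uses the full two-parameter ($SL(N)$ times $\Z^N$) definition of affine-null, rather than mere boundedness, and it is the same computation referenced from Lemma~4.1 of \cite{FT}; the rest is soft (weak compactness, lower semicontinuity, and the elementary norm identity under unimodular change of variables).
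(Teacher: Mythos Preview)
Your proof is correct and follows essentially the same approach as the paper: rewrite $E_2$ via \eqref{eq:asinf2}, run the dichotomy on $|T_k|$ by invoking the affine-null hypothesis exactly as in Theorem~\ref{thm:null-compact} to rule out the unbounded branch, and finish with weak lower semicontinuity of the Dirichlet integral. Your write-up is in fact more explicit than the paper's about the strong $L^p$ convergence in the bounded-$T_k$ branch and about the passage from $\|\nabla v\|_2^2\le\kappa_p$ to $E_2(u)\le\kappa_p$; the one point worth tightening is the $H^{1,2}$ boundedness of $(v_k)$, which follows cleanly from H\"older on $T_k\Omega$ once one notes $|T_k\Omega|=|\Omega|$.
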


\begin{proof}
Let $(u_k) \subset H_{0}^{1,2}(\Omega)$ be a minimizing sequence. Consider it as a sequence in $H^{1,2}(\R^N)$.  Let $T_k\in SL(N)$ be as in \eqref{eq:qq}. Repeating the argument in the proof of Theorem~\ref{thm:null-compact}, we may assume, for a suitable renamed subsequence,  that either $|T_k|\to \infty$ and then $u_k\to 0$ in $L^p$, or $T_k\to T\in SL(N)$, and $u_k$ converges weakly in $ H_{0}^{1,2}(\Omega)$ as well as in $L^p(\Omega)$ to some $u$.
The former case is ruled out, since by assumption $\|u_k\|_{p,\Omega}=1$. In the latter case, lower semicontinuity of the norm implies that $\|\nabla u\|_2^2\le \kappa_p$. Then by \eqref{eq:asinf2} $E_2(u)\le \kappa_p$, and thus $u$ is necessarily a minimizer.  	
\end{proof}
\begin{corollary}\label{cor:reg}
	Let $\Omega \subset \R^N$ be a bounded domain with a piecewise $C^1$-boundary.  Then \eqref{eq:bddDom} has a minimizer that, up to a scalar multiple, is a smooth positive classical solution of the boundary problem
\begin{equation} \label{eq:eq}
-\sum_{i,j=1}^{N}(A^{-1}[u])_{ij}\frac{\partial^2u}{\partial x_i\partial x_j}=u^{p-1} \mbox{ in }\Omega,\,u|_{\partial\Omega}=0.
\end{equation}
\end{corollary}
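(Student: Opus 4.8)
The plan is to derive Corollary~\ref{cor:reg} from Theorem~\ref{thm:bddDom} in three stages: first show the minimizer $u$ from \eqref{eq:bddDom} can be taken nonnegative, then write the Euler--Lagrange equation and read off that, after scaling, it solves \eqref{eq:eq}, and finally bootstrap regularity. For the first stage I would observe that $E_2(u)=\min_{T\in SL(N)}\|\nabla(u\circ T)\|_2^2$ and that $\|\nabla|v|\|_2=\|\nabla v\|_2$ for $v\in H^{1,2}_0(\Omega)$; hence $E_2(|u|)=\min_T\|\nabla(|u|\circ T)\|_2^2=\min_T\|\nabla|u\circ T|\|_2^2=\min_T\|\nabla(u\circ T)\|_2^2=E_2(u)$, while $\||u|\|_{p,\Omega}=\|u\|_{p,\Omega}=1$, so $|u|$ is also a minimizer. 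Thus we may assume $u\ge 0$, $u\not\equiv 0$.

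For the second stage, since $E_2\in C^1(\dot H^{1,2}_0(\Omega))$ with Fr\'echet derivative $-2\Delta_A$ (by the computation in Section 2.4), and $u\mapsto\|u\|_{p,\Omega}^p$ is $C^1$ on $H^{1,2}_0(\Omega)\subset L^p(\Omega)$ for $2<p<2^*$, the Lagrange multiplier rule at the constrained minimizer $u$ gives a $\lambda\in\R$ with $\Delta_A(u)+\lambda u^{p-1}=0$ weakly; pairing with $u$ and using that $E_2$ is $2$-homogeneous yields $E_2(u)=\lambda\|u\|_{p,\Omega}^p=\lambda>0$, so $\lambda=\kappa_p>0$. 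Rescaling $u\mapsto \kappa_p^{1/(p-2)}u$ absorbs the constant and produces a weak solution of $-\sum_{ij}(A^{-1}[u])_{ij}\partial^2_{ij}u=u^{p-1}$ in $\Omega$ with zero boundary data; note that $A[u]$ is a fixed positive-definite constant matrix (positive-definiteness because $E_2(u)=N\det A[u]^{1/N}\ge\kappa_p>0$), so the equation is genuinely a linear uniformly elliptic equation with constant coefficients applied to the unknown $u$, driven by the right-hand side $u^{p-1}$.

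For the third stage I would invoke Lemma~\ref{lem:transf}: choose $T\in SL(N)$ with $A[u\circ T]=\det(A[u])I$, so that by \eqref{eq:affine_Laplacian} the function $w\eqdef u\circ T$ satisfies $-\det(A[u])^{1/N}\Delta w=w^{p-1}$ in $T^{-1}\Omega$, i.e. an ordinary semilinear Poisson equation $-\Delta w=c\,w^{p-1}$ with $c>0$ and subcritical power. Standard elliptic bootstrap ($L^p$ theory / Schauder, as in Br\'ezis--Kato) upgrades $w\in H^1_0$ first to every $L^q$, then to $W^{2,q}$, then to $C^{2,\alpha}_{loc}$ and smoothness in the interior; the strong maximum principle gives $w>0$ in the interior since $w\ge0$, $w\not\equiv0$. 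Transporting back by $T$ (an affine diffeomorphism) preserves smoothness and positivity and the zero boundary condition, which also justifies the application of the maximum principle to $u$ itself via \eqref{eq:affine_Laplacian}. The main obstacle is the bootstrap: subcriticality of $p$ is essential to start the iteration, and one must be careful that the coefficient matrix $A[u]$ is constant (not depending on $x$) so the nonlocal operator is, for this fixed solution, an honest constant-coefficient uniformly elliptic operator — once that is clear the regularity theory is entirely classical.
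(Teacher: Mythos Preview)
Your proof is correct and follows essentially the same route as the paper: pass to $|u|$ via the representation $E_2(u)=\min_{T}\|\nabla(u\circ T)\|_2^2$, write the Euler--Lagrange equation with a Lagrange multiplier, observe that $A[u]^{-1}$ is a fixed positive constant matrix so the equation is uniformly elliptic with constant coefficients, bootstrap to smoothness, apply the strong maximum principle for positivity, and rescale to remove the multiplier. One small slip to fix: your rescaling $u\mapsto\kappa_p^{1/(p-2)}u$ turns $-\Delta_A(u)=\kappa_p u^{p-1}$ into $-\Delta_A(v)=v^{p-1}$, which still carries the factor $(\det A[v])^{1/N}$ in front of $\sum_{ij}(A^{-1}[v])_{ij}\partial_{ij}^2v$, whereas \eqref{eq:eq} omits it; since the operator $v\mapsto -\sum_{ij}(A^{-1}[v])_{ij}\partial_{ij}^2 v$ has homogeneity $-1$ (not $1$), the correct exponent in the rescaling is $1/p$ applied to $\kappa_p(\det A[u])^{-1/N}$, exactly as the paper's homogeneity remark indicates.
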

\begin{proof}
Note that if $u\in H_0^1(\Omega)$ is a minimizer for \eqref{eq:bddDom}, then 
so is $|u|$ by \eqref{eq:asinf2}:
\begin{align*}
\kappa_p =	\inf_{u \in H_{0}^{1,2}(\Omega), \|u\|_{p,\Omega} =1} E_2(u)
\\
=\inf_{u \in H_{0}^{1,2}(\Omega), \|u\|_{p,\Omega} =1, T\in SL(N)} \|\nabla(u\circ T)\|_2^2
\\
=\inf_{u \in H_{0}^{1,2}(\Omega), \|u\|_{p,\Omega} =1, T\in SL(N)} \|\nabla|u\circ T|\|_2^2
\\
=\inf_{u \in H_{0}^{1,2}(\Omega), \|u\|_{p,\Omega} =1}  E_2(|u|),
\end{align*}
 so we can without loss of generality assume that $u\ge 0$. Then, for some $\lambda>0$, the function $u$ satisfies, in the weak sense, 
\begin{equation} \label{eq:eq2}
-\sum_{i,j=1}^{N}(A^{-1}[u])_{ij}\frac{\partial^2u}{\partial x_i\partial x_j}=\lambda u^{p-1} \mbox{ in }\Omega.
\end{equation}
Note that $A[u]^{-1}$ is a positive constant matrix, as an inverse of a positive matrix, so the standard elliptic regularity and the bootstrap argument yield the smoothness of the solution. The solution is strictly positive by maximum principle for uniformly elliptic operators.
Finally, note that the left hand side of \eqref{eq:eq2} is of homogeneity $-1\neq p-1$, so a suitable scalar multiple of $u$ satisfies  \eqref{eq:eq}.    
\end{proof}

\section{Profile decompositions}


In this section we outline concentration behavior of sequences with bounded values of $E_2$ (note that they are not necessarily bounded in the Sobolev norm). 


\begin{theorem}
	\label{thm:ourPD}
		Let $(u_k)\subset\dot H^{1,2}(\R^N)$ satisfy $E_2(u_k)\le C$. There exist $(T_k)\subset SL(N)$, $w^{(n)}  \in
	\dot H^{1,2}(\R^N)$, $(y_{k} ^{(n)})_{k\in\N} \subset \R^N$, $(j_{k} ^{(n)})_{k\in\N}\subset \Z$ with $n \in\N$, and disjoint sets
	$\N_0,\N_{+\infty},\N_{-\infty}\subset\N$, such that, for a
	renumbered subsequence of $(u_k)$,
	\begin{eqnarray}
	\label{w_n}
	&&
	2^{-\frac{N-2}{2}j_k^{(n)}}u_k(T_k(2^{-j_k^{(n)}}\cdot+y_k^{(n)}))\rightharpoonup 	w^{(n)},\;
	n\in\N,
	\\
	\label{separates}
	&&
	|j_{k}^{(n)} -  j_{k}
	^{(m)}|+|2^{j_k^{(n)}}(y_{k} ^{(n)} - y_{k} ^{(m)})|\to \infty
	\mbox{ for } n \neq m,
	\\
	\label{norms}
	&&
	\sum_{n\in \N} \|\nabla w ^{(n)}\|_2^2 \le
  \liminf E_2(u_k),
	\\
	\label{BBasymptotics}
	&&
	u_{k}  -  \left[\sum_{n\in\N}
	2^{\frac{N-2}{2}j_k^{(n)}} w^{(n)}(2^{j_k^{(n)}}(\cdot-y_k^{(n)}))\right]\circ T_k^{-1}
	\to 0\; \mbox{in } L^{2^*},
	\end{eqnarray}
	 and the series in the square brackets above converges in $\dot H^{1,2}(\R^N)$ unconditionally and uniformly
	with respect to $k$.
	\par Moreover,
	$1\in\N_0$, $y_{k} ^{(1)} =0$; $j_k^{(n)}=0$ whenever $n\in\N_0 $;
	$j_k^{(n)}\to -\infty$ (resp. $j_k^{(n)}\to +\infty$) whenever
	$n\in\N_{-\infty}$ (resp. $n\in\N_{+\infty}$); and $y_k^{(n)}=0$
	whenever $|2^{j_k^{(n)}}y_k^{(n)}|$ is bounded.
\end{theorem}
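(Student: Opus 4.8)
The plan is to reduce this to a known profile decomposition in $\dot H^{1,2}(\R^N)$ (cited in the appendix, e.g. the dilation-translation profile decomposition for $\dot H^{1,2}$). The starting observation is that $E_2(u_k)\le C$ does not bound $(u_k)$ in the Sobolev norm, but by \eqref{eq:asinf2} there exists $T_k\in SL(N)$ with $\|\nabla(u_k\circ T_k)\|_2^2=E_2(u_k)\le C$. So first I would set $v_k\eqdef u_k\circ T_k$; this is now a bounded sequence in $\dot H^{1,2}(\R^N)$ to which the classical profile decomposition applies: there are dilation exponents $j_k^{(n)}\in\Z$, shifts $y_k^{(n)}\in\R^N$, and profiles $w^{(n)}\in\dot H^{1,2}(\R^N)$ with $2^{-\frac{N-2}{2}j_k^{(n)}}v_k(2^{-j_k^{(n)}}\cdot+y_k^{(n)})\rightharpoonup w^{(n)}$, satisfying the asymptotic orthogonality \eqref{separates}, the norm bound $\sum_n\|\nabla w^{(n)}\|_2^2\le\liminf\|\nabla v_k\|_2^2=\liminf E_2(u_k)$, which is exactly \eqref{norms}, and the remainder going to zero in $L^{2^*}$. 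Composing back with $T_k^{-1}$ and using that $L^{2^*}$-norms and the group actions commute with precomposition by $T_k$ (which is unimodular, hence $L^{2^*}$-isometric) then yields \eqref{w_n} and \eqref{BBasymptotics}. The unconditional, $k$-uniform convergence of the bracketed series in $\dot H^{1,2}$ is part of the cited theorem and transfers verbatim.

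The bookkeeping in the ``Moreover'' paragraph is essentially normalization. The index $n=1$ is reserved for the ordinary weak limit of $v_k$ itself, so one takes $j_k^{(1)}=0$, $y_k^{(1)}=0$, $w^{(1)}=\mathrm{w\text-}\lim v_k$, and puts $1\in\N_0$. The trichotomy $\N=\N_0\sqcup\N_{+\infty}\sqcup\N_{-\infty}$ (up to discarding finitely many or reindexing) comes from passing to a subsequence so that for each fixed $n$ the scalar sequence $(j_k^{(n)})$ either stays bounded — then, again on a subsequence, it is eventually constant and one absorbs the constant into $w^{(n)}$ so that $j_k^{(n)}=0$, placing $n\in\N_0$ — or tends to $+\infty$ (put $n\in\N_{+\infty}$) or to $-\infty$ (put $n\in\N_{-\infty}$). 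The normalization $y_k^{(n)}=0$ whenever $|2^{j_k^{(n)}}y_k^{(n)}|$ is bounded is the standard cleanup: a bounded translation after rescaling can be absorbed into the profile $w^{(n)}$ by replacing $w^{(n)}$ with a translate of a further weak limit, at the cost of one more subsequence extraction; one checks this does not disturb \eqref{separates}.

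The main obstacle, and the step deserving real care, is verifying that precomposition with the convergent-or-divergent sequence $T_k\in SL(N)$ interacts correctly with the dilation-translation profiles. Concretely: applying $T_k^{-1}$ turns a profile term $2^{\frac{N-2}{2}j_k^{(n)}}w^{(n)}(2^{j_k^{(n)}}(\cdot-y_k^{(n)}))$ in the $v_k$-decomposition into $\bigl[2^{\frac{N-2}{2}j_k^{(n)}}w^{(n)}(2^{j_k^{(n)}}(\cdot-y_k^{(n)}))\bigr]\circ T_k^{-1}$, and one must confirm that the asymptotic orthogonality \eqref{separates}, which is stated purely in terms of $j_k^{(n)}-j_k^{(m)}$ and $2^{j_k^{(n)}}(y_k^{(n)}-y_k^{(m)})$ \emph{before} composing with $T_k^{-1}$, is exactly what is needed — i.e. that the statement is formulated intrinsically in the ``$v_k$ coordinates'' and $T_k$ appears only as the single outer conjugation in \eqref{w_n} and \eqref{BBasymptotics}. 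This is really a matter of stating the theorem in the right coordinates rather than a hard estimate, but it is where a careless argument would go wrong, and it explains why $(T_k)$ is extracted once at the outset and then frozen. The remaining technical point — that the $L^{2^*}$ remainder bound passes through the $T_k^{-1}$ precomposition — is immediate since $\|g\circ T_k^{-1}\|_{2^*}=\|g\|_{2^*}$ for $T_k\in SL(N)$.
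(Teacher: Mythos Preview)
Your proposal is correct and follows exactly the paper's approach: choose $T_k\in SL(N)$ via Lemma~\ref{lem:transf} so that $v_k=u_k\circ T_k$ is bounded in $\dot H^{1,2}(\R^N)$, apply the classical dilation--translation profile decomposition (Theorem~\ref{thm:PDSob}) to $v_k$, and then compose the remainder relation with $T_k^{-1}$ using the $SL(N)$-invariance of the $L^{2^*}$-norm. Your discussion of the ``Moreover'' normalizations and of why the orthogonality condition \eqref{separates} is stated in the $v_k$-coordinates is more explicit than the paper's two-line proof, but the underlying argument is identical.
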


\begin{proof}
Let $T_k\in SL(N)$ such that, according to Lemma~\ref{lem:transf} 
\begin{equation}
\label{eq:qq}
E_2(u_k) =E_2(u_k\circ T_k) = \|\nabla (u_k \circ T_k)\|_2^2.	
\end{equation}
 Let $v_k=u_k \circ T_k$ and apply Theorem~\ref{thm:PDSob} from Appendix. To conclude the proof of Theorem~\ref{thm:ourPD} 
it remains to note that \eqref{BBasymptotics*} gives \eqref{BBasymptotics} by composing the left and the right hand side with $T^{-1}_k$ on the right, and that the right hand side of \eqref{norms*} yields the right hand side of \eqref{norms} by \eqref{eq:qq}.
\end{proof}

A analogous decomposition for sequences with bounded $E_2+\|\cdot\|_2^2$ can be derived in a completely analogous way from Proposition~\ref{acc-shifts} in Appendix: 

\begin{proposition}
	\label{acc-shiftsA}
	Let $(u_{k})\in H^{1,2}(\R^N)$ be a sequence such that $E_2(u_k)+\|u_k\|_2^2\leq C$. There
	exist $w^{(n)}  \in
	H$, $(T_k)\subset SL(N)$, and $(y_{k} ^{(n)})_{k\in\N} \subset \Z^N$, $y_{k} ^{(1)} =0$, $n\in\N$,
	such that, on a renumbered subsequence,
	\begin{eqnarray}
	\label{w_n-shiftsA} && u_k(T_k(\cdot+y_{k} ^{(n)}))\rightharpoonup w^{(n)},
	\\
	&&\label{separates-shiftsA} |y_{k} ^{(n)}-y_{k} ^{(m)}|  \mbox{ for
	} n \neq m,
	\\
	&&\label{norms-shiftsA} \sum_{n \in \N} \|w ^{(n)}\|_{H^{1,2}}^2 \le
	\limsup \|u_k\|_{H^{1,2}}^2,
	\\
	&&\label{BBasymptotics-shiftAs} u_{k} - \left[\sum_{n\in\N}
	w^{(n)}(\cdot-y_{k} ^{(n)})\right]\circ T_k^{-1}  \to 0\, \text{in }
	L^p(\R^N), p\in (2,2^*), 
	\end{eqnarray}
and the series in the square brackets above converges
	in $H^{1,2}(\R^N)$ unconditionally and uniformly in $k$.
\end{proposition}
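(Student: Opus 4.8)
The plan is to transfer Proposition~\ref{acc-shifts} through the $SL(N)$-straightening furnished by Lemma~\ref{lem:transf}, exactly the way Theorem~\ref{thm:ourPD} is deduced from its Sobolev-space counterpart. First I would pick, for each $k$, a matrix $T_k\in SL(N)$ realizing (or, in the possibly degenerate situation $E_2(u_k)=0$, approximating to within $2^{-k}$) the minimum in \eqref{eq:asinf2}, so that $\|\nabla(u_k\circ T_k)\|_2^2\le E_2(u_k)+2^{-k}$, with equality and \eqref{eq:qq} when $A[u_k]$ is nonsingular. Set $v_k\eqdef u_k\circ T_k$. Since $\det T_k=1$, composition with $T_k$ is an $L^q$-isometry for every $q$; in particular $\|v_k\|_2=\|u_k\|_2$, so $\|v_k\|_{H^{1,2}}^2=\|\nabla v_k\|_2^2+\|v_k\|_2^2\le E_2(u_k)+\|u_k\|_2^2+2^{-k}\le C+1$. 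Thus $(v_k)$ is bounded in $H^{1,2}(\R^N)$ and Proposition~\ref{acc-shifts} applies to it.

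Applying Proposition~\ref{acc-shifts} to $(v_k)$ produces, on a renumbered subsequence, profiles $w^{(n)}\in H^{1,2}(\R^N)$ and concentration points $(y_k^{(n)})\subset\Z^N$ with $y_k^{(1)}=0$, for which $v_k(\cdot+y_k^{(n)})\rightharpoonup w^{(n)}$, $|y_k^{(n)}-y_k^{(m)}|\to\infty$ when $n\neq m$, $\sum_n\|w^{(n)}\|_{H^{1,2}}^2\le\limsup\|v_k\|_{H^{1,2}}^2$, and $v_k-\sum_n w^{(n)}(\cdot-y_k^{(n)})\to 0$ in $L^p(\R^N)$ for $p\in(2,2^*)$, the bracketed series converging in $H^{1,2}(\R^N)$ unconditionally and uniformly in $k$. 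Reading these back in the original frame: since $v_k(\cdot+y_k^{(n)})=u_k(T_k(\cdot+y_k^{(n)}))$, the first relation is precisely \eqref{w_n-shiftsA}, and \eqref{separates-shiftsA} is verbatim. For \eqref{norms-shiftsA}, note $E_2(u_k)=\min_{T\in SL(N)}\|\nabla(u_k\circ T)\|_2^2\le\|\nabla u_k\|_2^2$, so $\|v_k\|_{H^{1,2}}^2\le E_2(u_k)+\|u_k\|_2^2+2^{-k}\le\|u_k\|_{H^{1,2}}^2+2^{-k}$, and $\limsup$ of the right side equals $\limsup\|u_k\|_{H^{1,2}}^2$; hence $\sum_n\|w^{(n)}\|_{H^{1,2}}^2\le\limsup\|u_k\|_{H^{1,2}}^2$. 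Finally, writing $r_k\eqdef v_k-\sum_n w^{(n)}(\cdot-y_k^{(n)})$, one has $u_k-\big[\sum_n w^{(n)}(\cdot-y_k^{(n)})\big]\circ T_k^{-1}=r_k\circ T_k^{-1}$, and $\|r_k\circ T_k^{-1}\|_p=\|r_k\|_p\to 0$ by unit Jacobian, which is \eqref{BBasymptotics-shiftAs}. The unconditional, uniform-in-$k$ convergence of $\sum_n w^{(n)}(\cdot-y_k^{(n)})$ in $H^{1,2}(\R^N)$ is inherited directly from Proposition~\ref{acc-shifts}, since that series is formed \emph{before} composing with $T_k^{-1}$ and so needs nothing further checked.

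The step needing the most care is purely bookkeeping about which operation preserves which norm: composition with a matrix in $SL(N)$ is an $L^q$-isometry for every $q$ (harmless for the $L^2$-bound, for the $L^p$-remainder, and for transporting the whole decomposition back), but it distorts $\|\nabla\cdot\|_2$ — which is exactly why the straightening $u_k\mapsto v_k$ must be carried out first and why the profile series in \eqref{BBasymptotics-shiftAs} is kept in the straightened coordinates rather than absorbed into the $w^{(n)}$. The only analytic input, shared with the proof of Theorem~\ref{thm:ourPD}, is the existence of an (approximate) minimizer $T_k$ in \eqref{eq:asinf2}, supplied by the last assertion of Lemma~\ref{lem:transf} together with the arithmetic–geometric mean bound used in its corollary; no compactness beyond Proposition~\ref{acc-shifts} is needed.
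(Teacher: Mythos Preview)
Your proof is correct and follows exactly the approach the paper indicates: pick $T_k$ as in Lemma~\ref{lem:transf} (with your harmless $2^{-k}$-approximation covering the degenerate case), set $v_k=u_k\circ T_k$, apply Proposition~\ref{acc-shifts} to the bounded sequence $(v_k)$, and pull the decomposition back via $T_k^{-1}$ using that $SL(N)$ acts by $L^q$-isometries. This is precisely the analogy with the proof of Theorem~\ref{thm:ourPD} that the paper invokes in place of a written-out argument.
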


\section{Affine-flask sets. Poblems with penalty}	

\begin{definition}
	An open subset $\Omega$ of $\R^N$ will be called affine-flask set if  for any $(T_k)\subset SL(N)$ and $(y_k)\subset\Z^N$, such that $|y_k| + |T_k|\to\infty$,  
	there exist a $y \in \Z^N$ and a $T \in SL(N)$ such that 
	\begin{equation}
	\label{eq:flask}
	\left|\liminf T_k^{-1} (\Omega-y_k) \setminus (T \Omega+ y) \,\right| = 0 .
	\end{equation}
\end{definition}
In other words, $\liminf T_k^{-1} (\Omega-y_k)$ is contained, up to a set of measure zero, in the image of $\Omega$ under some affine transformation.

Obviously an affine-null set as well as $\R^N$ are  affine flask sets. 
The union of unit balls $\bigcup_{n\in\N} B_1(n^4e_0)$, $|e_0|=1$, is an affine flask set. If one connects consecutive balls by circular cylinders of corresponding radius $e^{-n}$ that have $\R e_0$ as their common axis, one gets a connected affine flask set.  On the other hand a cylindrical domain with a smooth boundary is an affine flask set only if it is $\R^N$. Indeed, let $\Omega=\R\times\omega$ and let $T_k$ be a diagonal matrix with diagonal entries $k^{1-N},k,\dots,k$. Then $\liminf T_k\Omega=\R^N$. 
\begin{theorem}
\label{thm:flask}	Let $p\in(2,2^*)$ and let $\Omega \subset \R^N$ be an open affine flask set with a piecewise-$C^1$ boundary $[$for example, $\Omega = \R^N$ $]$.  Then the minimum in the problem
	\begin{equation}
	\label{eq:FP}
	\kappa =	\inf_{u \in H_{0}^{1,2}(\Omega): \|u\|_{p,\Omega} =1} E_2(u) + \|u\|_2^2
	\end{equation}
	is attained.
\end{theorem}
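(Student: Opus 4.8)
The plan is to run the direct method combined with the profile decomposition of Proposition~\ref{acc-shiftsA}. Let $(u_k)\subset H_0^{1,2}(\Omega)$ be a minimizing sequence for \eqref{eq:FP}, so $\|u_k\|_{p,\Omega}=1$ and $E_2(u_k)+\|u_k\|_2^2\to\kappa$. Extend each $u_k$ by zero to $\R^N$; the sequence then satisfies the hypothesis of Proposition~\ref{acc-shiftsA} (with $C=\kappa+1$, say). Apply it to obtain $(T_k)\subset SL(N)$, shifts $(y_k^{(n)})\subset\Z^N$, and profiles $w^{(n)}\in H^{1,2}(\R^N)$ such that \eqref{w_n-shiftsA}--\eqref{BBasymptotics-shiftAs} hold; in particular $u_k\circ T_k-\sum_n w^{(n)}(\cdot-y_k^{(n)})\to 0$ in $L^p$, and therefore, taking $k\to\infty$ along the subsequence, $\sum_n \|w^{(n)}\|_p^p = \lim\|u_k\circ T_k\|_p^p = \lim\|u_k\|_p^p = 1$ (here one uses the asymptotic orthogonality \eqref{separates-shiftsA} of the translates to split the $L^p$ norm, the standard Brezis--Lieb-type argument for profile decompositions).

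The second step is to locate each profile inside an affine image of $\Omega$. Since $u_k$ is supported in $\Omega$, the function $u_k\circ T_k$ is supported in $T_k^{-1}\Omega$, and so $u_k(T_k(\cdot+y_k^{(n)}))$ is supported in $T_k^{-1}\Omega - y_k^{(n)}$. Hence $w^{(n)}$, being the weak $H^{1,2}$-limit of functions supported there, must vanish a.e. outside $\liminf_k\bigl(T_k^{-1}(\Omega)-y_k^{(n)}\bigr)$ — here I would invoke the same localization argument used in the proof of Theorem~\ref{thm:null-compact} and Lemma~4.1 of \cite{FT}. If $|T_k|+|y_k^{(n)}|\to\infty$, the affine-flask hypothesis \eqref{eq:flask} gives an affine transformation under whose image of $\Omega$ the set $\liminf_k(T_k^{-1}(\Omega)-y_k^{(n)})$ sits up to measure zero; composing $w^{(n)}$ with the inverse of that affine map produces $\tilde w^{(n)}\in H_0^{1,2}(\Omega)$ with $\|\tilde w^{(n)}\|_p=\|w^{(n)}\|_p$, $\|\tilde w^{(n)}\|_2=\|w^{(n)}\|_2$, and $\|\nabla\tilde w^{(n)}\|_2^2\ge E_{2,\Omega}(\tilde w^{(n)})$ by \eqref{eq:asinf2}; if instead $(T_k,y_k^{(n)})$ stays bounded, after a further subsequence $T_k\to T\in SL(N)$, $y_k^{(n)}=y^{(n)}$ is eventually constant, and $w^{(n)}\circ(\text{shift})\circ T^{-1}\in H_0^{1,2}(\Omega)$ serves the same purpose. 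In all cases we obtain, for each $n$ with $w^{(n)}\neq 0$, a nonzero $\tilde w^{(n)}\in H_0^{1,2}(\Omega)$ with
\begin{equation}
\label{eq:profile-energy}
E_{2,\Omega}(\tilde w^{(n)})+\|\tilde w^{(n)}\|_2^2 \;\le\; \|\nabla w^{(n)}\|_2^2+\|w^{(n)}\|_2^2 \;=\;\|w^{(n)}\|_{H^{1,2}}^2.
\end{equation}

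The third step is the energy bookkeeping. Define $\kappa^\sharp=\inf\{E_{2,\Omega}(v)+\|v\|_2^2:\ v\in H_0^{1,2}(\Omega),\ \|v\|_{p,\Omega}=1\}$; note $\kappa^\sharp\le\kappa$ trivially, and $\kappa>0$ since $p>2$ forces $\|v\|_2$ and $\|v\|_p$ to be comparable on bounded-energy sets. For each $n$, homogeneity gives $\|w^{(n)}\|_{H^{1,2}}^2\ge E_{2,\Omega}(\tilde w^{(n)})+\|\tilde w^{(n)}\|_2^2\ge \kappa\,\|\tilde w^{(n)}\|_p^2 = \kappa\,\|w^{(n)}\|_p^2$ (using $\kappa^\sharp\le\kappa$ would be the wrong direction, so I instead use $\kappa^\sharp$ here and upgrade at the end — more precisely $\ge\kappa^\sharp\|w^{(n)}\|_p^2$). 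Summing over $n$ and using \eqref{norms-shiftsA}, $\sum_n\|w^{(n)}\|_p^p=1$, and the elementary inequality $\sum a_n^{2/p}\le(\sum a_n)^{2/p}$ for $p>2$ applied to $a_n=\|w^{(n)}\|_p^p$, we get
\begin{equation}
\kappa=\lim\bigl(E_2(u_k)+\|u_k\|_2^2\bigr)\ge \sum_{n}\|w^{(n)}\|_{H^{1,2}}^2 \ge \kappa^\sharp\sum_n\|w^{(n)}\|_p^2 \ge \kappa^\sharp\Bigl(\sum_n\|w^{(n)}\|_p^p\Bigr)^{2/p}=\kappa^\sharp.
\end{equation}
Combined with $\kappa^\sharp\le\kappa$ this yields $\kappa^\sharp=\kappa$, and moreover equality must hold throughout. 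Equality in $\sum a_n^{2/p}\le(\sum a_n)^{2/p}$ with $p>2$ forces all but one $a_n$ to vanish, so exactly one profile survives, say $w^{(1)}\neq 0$ with $\|w^{(1)}\|_p=1$; equality in the chain then gives $\|\nabla w^{(1)}\|_2^2+\|w^{(1)}\|_2^2=E_{2,\Omega}(\tilde w^{(1)})+\|\tilde w^{(1)}\|_2^2=\kappa$, so $\tilde w^{(1)}$ is the desired minimizer.

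The main obstacle I expect is the localization step, namely justifying rigorously that the weak limit $w^{(n)}$ is supported (up to null sets) in $\liminf_k(T_k^{-1}\Omega-y_k^{(n)})$ when $SL(N)$-distortions and integer shifts act simultaneously; the paper already signals this by pointing to Lemma~4.1 of \cite{FT}, and the compounding of an unbounded matrix sequence with an unbounded shift sequence needs a little care (one should first extract so that $T_k^{-1}$ either converges or blows up, handle the blow-up via \eqref{eq:flask}, and check that the affine map supplied by the flask condition can be chosen independently of $k$ along the subsequence). A secondary, more routine point is the Brezis--Lieb splitting $\lim\|u_k\|_p^p=\sum_n\|w^{(n)}\|_p^p$, which follows from \eqref{BBasymptotics-shiftAs}, \eqref{separates-shiftsA}, and the unconditional convergence of the series, exactly as in the standard profile-decomposition literature cited in the Appendix.
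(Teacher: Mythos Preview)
Your proof is correct and follows the paper's own route: apply Proposition~\ref{acc-shiftsA} to the minimizing sequence, split the $L^p$ mass by the iterated Brezis--Lieb lemma, use the affine-flask condition \eqref{eq:flask} to relocate each profile into $H_0^{1,2}(\Omega)$, and then run the super-additivity bound $\kappa\ge\sum_n\kappa\,t_n^{2/p}$ with $\sum_n t_n=1$ to force a single surviving profile that realizes the infimum. Two cosmetic slips are worth cleaning up: your auxiliary $\kappa^\sharp$ is literally $\kappa$ (for $v\in H_0^{1,2}(\Omega)$ extended by zero one has $E_{2,\Omega}(v)=E_2(v)$), so the sentence ``combined with $\kappa^\sharp\le\kappa$ this yields $\kappa^\sharp=\kappa$'' is vacuous and the equality should be read off directly from the chain $\kappa\ge\kappa\sum_n t_n^{2/p}\ge\kappa$; and the ``elementary inequality'' you quote has the wrong sign --- for exponent $2/p<1$ one has $\sum a_n^{2/p}\ge(\sum a_n)^{2/p}$, which is in fact the direction you use in the displayed chain.
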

\begin{proof}
Let $(u_k) \subset H_0^{1,2}(\Omega)$ be a minimizing sequence. Consider it as a sequence in $H^{1,2}(\R^N)$.
 Let $(T_k) \subset SL(N)$ and let $w^{(n)}$, $n \in \N$, be as in Theorem~\ref{acc-shiftsA}, so we have  $E_2(u_k \circ T_k) =  \| \nabla (u_k \circ T_k) \|_2^2$. 
 From the iterated Brezis-Lieb Lemma (see e.g. \cite{CwiTi}) we have 
 \begin{equation}
 	1 = \|u_k\|_p^p = \sum_n \|w^{(n)} \|_p^p.
 \end{equation}
Let $t_n = \|w^{(n)}\|_p^p$.  
 
 By   \eqref{norms-shiftsA}  
 \begin{align}
 \kappa & = &	\lim E_2(u_k(T_k \cdot -y +y_k^{(n)})) + \|u_k(T_k \cdot -y +y_k^{(n)})\|_2^2\nonumber \\ 
 	& \ge & \sum_{n \in \N}\|\nabla w ^{(n)}\|^2_2 + \| w ^{(n)}\|^2_2 \nonumber \\ 
 	& \ge & \sum_{n \in \N} E_2(w ^{(n)}) + \|w ^{(n)}\|_2^2. \label{eq:energy}
 \end{align}
 Equation \eqref{eq:flask} implies that with some $T^{(n)} \in SL(N)$ and some $y_n \in \R^N$ one has 
 \[
 u_k(T_k ((T^{(n)})^{-1} \cdot -y_n) +y_k^{(n)}) \rightharpoonup w^{(n)} ((T^{(n)})^{-1} (\cdot -y_n)) \in H_0^{1,2}(\Omega).
 \]
 From \eqref{eq:energy} we have 
 \begin{equation}
 \kappa \ge \sum_{n \in \N} \kappa t_n^{2/p},  
 \end{equation}
 which can hold only if $t_n = 0$ for $n \neq m$ and $t_m =1$ with some $m \in \N$.   Consequently $w^{(m)} ((T^{(m)})^{-1} (\cdot -y_m))$ is a minimizer.
\end{proof}
\begin{remark}
Any minimizer for the problem \eqref{eq:flask} is, up to a scalar multiple, a positive smooth solution of the boundary value problem  \begin{equation} \label{eq:eq3}
-\det A[u]^{1/N}\sum_{i,j=1}^{N}(A^{-1}[u])_{ij}\frac{\partial^2u}{\partial x_i\partial x_j}+u=u^{p-1},\;u|_{\partial\Omega}=0.
\end{equation}
The argument copies that of Corollary~\ref{cor:reg} with one modification:
in the proof of the corollary we omitted the scalar factor $\det A[u]^{1/N}$ in the Frechet derivative of the left hand side. We do not omit it here, and as a consequence the left hand side is now of homogeneity $1<p-1$, which allows to replace $u$ by its scalar multiple while setting the Lagrange multiplier to $1$. 
\end{remark}

\begin{theorem}
\label{thm:penalty} Let $p\in(2,2^*)$ and let $V\in L^\infty(\R^N)$ satisfy   $\lim_{|x|\to\infty}V(x)\to 1$ and $V(x)\le 1$, assuming that the latter inequality is strict on a set of positive measure.
 Then the minimum in the problem
	\begin{equation}
	\label{eq:FPp}
	\kappa' =	\inf_{u \in  H^{1,2}(\R^N), \|u\|_{p,\R^N} =1} E_2(u) + \int_{\R^N} V(x) u(x)^2\dx{x}
	\end{equation}
	is attained.
\end{theorem}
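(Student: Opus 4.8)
The plan is to follow the same concentration-compactness strategy as in Theorem~\ref{thm:flask}, using the profile decomposition of Proposition~\ref{acc-shiftsA} for the minimizing sequence, but now tracking the potential term $\int V u^2$ and exploiting the sub-criticality condition $V\le 1$ together with $V(x)\to 1$ at infinity. First I would take a minimizing sequence $(u_k)\subset H^{1,2}(\R^N)$ with $\|u_k\|_p=1$, observe that $\kappa'\le\kappa_{\mathrm{lim}}$ where $\kappa_{\mathrm{lim}}$ is the value of the ``limit problem'' \eqref{eq:subcrit0} (take $E_2+\|\cdot\|_2^2$ which dominates $E_2+\int V u^2$ pointwise up to the decay of $V$; more precisely, any fixed test function shifted to infinity has energy converging to its value with $V\equiv1$, hence $\kappa'\le\kappa_{\mathrm{lim}}$), and then apply Proposition~\ref{acc-shiftsA} to extract $(T_k)\subset SL(N)$, concentration points $(y_k^{(n)})\subset\Z^N$ with $y_k^{(1)}=0$, and profiles $w^{(n)}$, with $\sum_n\|w^{(n)}\|_{H^{1,2}}^2\le\limsup\|u_k\|_{H^{1,2}}^2$ and $1=\sum_n\|w^{(n)}\|_p^p$ by the iterated Brezis--Lieb lemma.

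Next I would split the potential energy according to the two types of profiles: the ``local'' profile $w^{(1)}$, which lives on a bounded region (since $y_k^{(1)}=0$, only the $SL(N)$-distortion $T_k$ acts on it, and one must first check $T_k$ stays bounded — if $|T_k|\to\infty$ then by the argument in Theorem~\ref{thm:null-compact} one gets $u_k\to0$ in $L^p$, contradicting $\|u_k\|_p=1$, so along a subsequence $T_k\to T\in SL(N)$), versus the ``escaping'' profiles $w^{(n)}$, $n\ge2$, for which $|y_k^{(n)}|\to\infty$. For the escaping profiles, since $V(x)\to1$ as $|x|\to\infty$ and $V\in L^\infty$, the potential term converges to the full $\|w^{(n)}\|_2^2$; for $w^{(1)}$ it converges to $\int V(T\cdot)\,|w^{(1)}|^2$ (after composing with $T_k\to T$) plus lower-order pieces. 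Combining with \eqref{norms-shiftsA} and weak lower semicontinuity one obtains
\begin{equation*}
\kappa' \ge \Big(E_2(w^{(1)}) + \int_{\R^N} V(x)\,|w^{(1)}(T^{-1}x)|^2\dx{x}\Big)\det(T)^{?} + \sum_{n\ge2}\big(E_2(w^{(n)}) + \|w^{(n)}\|_2^2\big),
\end{equation*}
but since $\det T=1$ the Jacobian factors are trivial and $E_2$ is $SL(N)$-invariant, so this simplifies to $\kappa'\ge \mathcal E_V(v) + \sum_{n\ge2}\mathcal E_1(w^{(n)})$ where $v = w^{(1)}\circ T^{-1}$, $\mathcal E_V(u)=E_2(u)+\int Vu^2$, $\mathcal E_1(u)=E_2(u)+\|u\|_2^2$. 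Using $\mathcal E_1(w)\ge\kappa_{\mathrm{lim}}\|w\|_p^2 \ge \kappa'\|w\|_p^2$ for $n\ge2$ and $\mathcal E_V(v)\ge\kappa'\|v\|_p^2$, together with $\sum_n\|w^{(n)}\|_p^p=1$ and $\|w^{(n)}\|_p\le1$, I get $\kappa'\ge\kappa'\sum_n\|w^{(n)}\|_p^{2}\ge\kappa'\sum_n\|w^{(n)}\|_p^{p}=\kappa'$, with equality forcing a single profile carrying all the mass.

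The key remaining point — and the main obstacle — is to rule out that the single surviving profile is an escaping one (i.e. to show $m=1$, the local profile, rather than $m\ge2$). This is exactly where the strict inequality $V<1$ on a set of positive measure is used: if the mass concentrated in an escaping profile $w^{(m)}$, $m\ge2$, then $v\equiv0$ and $\kappa'=\mathcal E_1(w^{(m)})\ge\kappa_{\mathrm{lim}}$; but one shows $\kappa'<\kappa_{\mathrm{lim}}$ strictly, by taking the radial minimizer $u_0$ of the limit problem \eqref{eq:subcrit0} (which exists and is positive by the earlier isoperimetric theorem), normalizing $\|u_0\|_p=1$, and computing $\mathcal E_V(u_0) = \mathcal E_1(u_0) + \int(V-1)|u_0|^2 < \mathcal E_1(u_0) = \kappa_{\mathrm{lim}}$ since $V-1\le0$ with strict inequality on a set of positive measure and $u_0>0$ everywhere. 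Hence $\kappa'\le\mathcal E_V(u_0)<\kappa_{\mathrm{lim}}$, which is incompatible with the escaping-profile scenario, so $m=1$: the mass stays in $v=w^{(1)}\circ T^{-1}$, and $v$ (suitably, it already has unit $L^p$ norm) is the desired minimizer. I would close by noting, as in the remark after Theorem~\ref{thm:flask}, that $|v|$ is also a minimizer by \eqref{eq:asinf2}, so the minimizer may be taken nonnegative, and standard elliptic regularity applies since $A[v]^{-1}$ is a constant positive-definite matrix.
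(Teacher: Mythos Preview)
Your overall strategy matches the paper's, but there is a genuine gap in how you dispose of the case $|T_k|\to\infty$. You write that ``by the argument in Theorem~\ref{thm:null-compact} one gets $u_k\to 0$ in $L^p$''. That argument relies on the affine-null property \eqref{eq:no_profiles} of $\Omega$, and $\R^N$ is \emph{not} affine-null (it is only affine-flask). In fact the conclusion you want is simply false on $\R^N$: take any fixed $u_0$ with $\|u_0\|_p=1$ and set $u_k=u_0\circ S_k^{-1}$ for diagonal $S_k\in SL(N)$ with $|S_k|\to\infty$; then $v_k=u_k\circ T_k$ with $T_k=S_k$ is constant, $|T_k|\to\infty$, yet $\|u_k\|_p=1$ for all $k$. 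So you cannot exclude unbounded $T_k$ by a compactness contradiction on the constraint.

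The paper handles this case differently: when $|T_k|\to\infty$, it argues that the correction $\int (V-1)u_k^2$ drops out (using that $V-1$ vanishes at infinity and gets ``spread'' by $T_k$), so that $\kappa'\ge \sum_n\big(E_2(w^{(n)})+\|w^{(n)}\|_2^2\big)\ge \kappa_{\mathrm{lim}}$, which contradicts the strict inequality $\kappa'<\kappa_{\mathrm{lim}}$ you already established. In other words, when $|T_k|\to\infty$ \emph{every} profile --- including $w^{(1)}$ --- must be priced at the limit level $\kappa_{\mathrm{lim}}$, exactly as you do for the escaping profiles $n\ge 2$ in the bounded-$T_k$ case. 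Once you replace your null-compact step with this energy comparison, the rest of your argument (the splitting of the potential, the superadditivity $\sum t_n^{2/p}\ge 1$, and the strict inequality ruling out escaping mass) is essentially the paper's proof.
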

\begin{proof} 
Let $(u_k) \subset C_{0}^{\infty}(\R^N)$ be a minimizing sequence. 
 Let $(T_k) \subset SL(N)$ and let $w^{(n)}$, $n \in \N$, be as in
 Theorem~\ref{acc-shiftsA}, so we have  $E_2(u_k \circ T_k) =  \| \nabla (u_k \circ T_k) \|_2^2$.
 From the iterated Brezis-Lieb Lemma we have 
 \begin{equation}
 	1 = \|u_k\|_p^p = \sum_n \|w^{(n)} \|_p^p.
 \end{equation}
Let $t_n = \|w^{(n)}\|_p^p$.  
 
 Let us represent $E_2(u_k \circ T_k)+\int V(x)u_k(x)^2\dx{x}$
 as $\| \nabla (u_k \circ T_k) \|_2^2+\|u_k\circ T_k\|_2^2+\int (V(x)-1)u_k(x\circ T_k)^2\dx{x}$ and note that the last term is weakly continuous in
 $H^{1,2}(\R^N)$.
 
 Assume first that $|T_k|\to\infty$. Then 
 by \eqref{norms-shiftsA} we have
 \begin{align}
 \kappa' & = &	\lim \|u_k(T_k \cdot -y +y_k^{(n)})\|_{H^{1,2}}^2
 	\nonumber \\ 
 	& \ge & \sum_{n \in \N} \|\nabla w ^{(n)}\|^2_2 + \| w ^{(n)}\|^2_2 \nonumber \\ 
 	& \ge & \sum_{n \in \N}E_2(w ^{(n)}) + \| w ^{(n)}\|^2_2
 	\\
 	& \ge & \sum_{n \in \N} \kappa t_n^{2/p}\ge\kappa_p, \label{eq:energy2}
 \end{align}
where $\kappa_p$ is the constant \eqref{eq:bddDom}. Evaluation of the left hand side of \eqref{eq:FPp} at the minimizer of \eqref{eq:FP} gives, however, that $\kappa'<\kappa_p$, which is a contradiction. Consequently, on a suitable renamed subsequence, we have $T_k\to T\in SL(N)$. In this case $u_k\rightharpoonup w^{(1)}\circ T^{-1}$ and
\eqref{norms-shiftsA} gives
\begin{align}
 \kappa' & = &	\lim \|u_k\circ T_k\|_{H^{1,2}}^2+\int (V(x)-1)(w^{(1)}\circ T^{-1})^2\dx{x} 	\nonumber \\ 
 	&\ge &\kappa' t_1^{2/p}+\sum_{n =2}^\infty \kappa t_n^{2/p},
 	\label{eq:energy3}
 \end{align}
which is false unless $t_n = 0$ for $n >1 $ and $t_1 =1$.   Consequently $w^{(1)}\circ T^{-1}$ is a minimizer.
\end{proof}

\section{Open problems}

\begin{enumerate}
	\item The functional 
	\begin{equation}\label{eq:asinfp}
		E_p(u)\eqdef\inf_{T\in SL(N)}\|\nabla(u\circ T)\|^p_p
	\end{equation}
dominates $\|u\|_{p^*}^p$ and is scaling- and $SL(N)$-invariant.  By \eqref{eq:asinf}, if $p=2$, $E_p$ equals $J_p^2$ up to a scalar multiple. 	
	What is the relation between $ E_p$ and $J_p$ for general $p$?
 \item Is there uniqueness for affine Laplace equation \eqref{eq:LE}? What characterizes a pair $(\Omega, f)$ for which the solution of the affine Laplace equation satisfies the ordinary Laplace equation?
	\item An affine Sobolev inequality of Moser-Trudinger type, when $p=N$, has been proved in \cite{cianchi}. What are compactness properties of this embedding? We conjecture, in line with the result from \cite{AdiTi} that when $p=N=2$ the Moser functional $\int_\Omega e^{4\pi u^2}\dx{x}$, where $\Omega\subset \R^2$ is a bounded domain, is weakly sequentially continuous on any sequence $(u_k)\subset  \{u\in H^{1,2}_{0}(\Omega):\;E_2(u)\le 1\}$ unless there exists $(g_k)\in D_A$ such that $E_2(u_k-g_k\mu)\to 0$, where $\mu(x)=\frac{1}{\sqrt{2\pi}}\min\{1,\log\frac{1}{|x|}\}$.
	\item There are many more variational problems  involving the affine Laplacian that can be handled by means of compactness results in this paper, in particular the Brezis-Nirenberg problem and a variety of minimax problems.
\end{enumerate}

\section*{Appendix}
\textbf{1.} Inequality \eqref{eq:afS} in the case $p=2$ can be also easily derived from 
the  intermediate step in the Nirenberg's proof of the usual Sobolev  inequality (\cite{Nirenberg59}, reproduced in the book \cite{GT}):
\begin{equation*}
\int_{\R^N} |u|^{\frac{N}{N-1}} \mathrm{d}x  
\le  C \left(\prod_i \int_{\R^N} |\nabla_i u|\mathrm{d}x\right)^{\frac{1}{N-1}}.
\end{equation*}
Setting $u=|v|^{\frac{2N-2}{N-2}}$ we have
\begin{eqnarray*}
	\int_{\R^N} |v|^{2^*} \mathrm{d}x  & = &  \int_{\R^N} |u|^{\frac{N}{N-1}} \mathrm{d}x  \\
	&  \le  & C \left(\prod_i \int_{\R^N} |\nabla_i u|\mathrm{d}x\right)^{\frac{1}{N-1}} \\
	& =  & C\left(\prod_i \int_{\R^N} |\nabla_i v| |v|^{\frac{N}{N-2}} \mathrm{d}x\right)^{\frac{1}{N-1}} \\
	&\le &  C\left ( \int_{\R^N} |v|^{2^*} \mathrm{d}x \right )^{\frac{N}{2(N-1)}} \left
	( \prod_i \int_{\R^N} |\nabla_i v|^2 \dx{x}\right )^{\frac{1}{2(N-2)}}.
\end{eqnarray*}
Note now that the latter product is $\det A[v]$ whenever $A$ is a diagonal matrix. Since by Lemma~\ref{lem:transf}, any matrix $A[v]$ can be diagonalized by setting $v=w\circ T$ with a suitable $T\in O(N)$, inequality  \eqref{eq:afS} for $p=2$ is proved.  \hfill $\square$
\vskip1cm\noindent
\textbf{2.} The following theorem from \cite{ST} is a trivial refinement of the main theorem in \cite{Solimini} (Sergio Solimini).
\begin{theorem}
	\label{thm:PDSob}
	Let $(v_{k})\subset \dot H^{1,2}(\R^N)$, $N>2$, be a
		bounded sequence. There exist $w^{(n)}  \in
		\dot H^{1,2}(\R^N)$, $(y_{k} ^{(n)})_{k\in\N} \subset \R^N$, $(j_{k} ^{(n)})_{k\in\N}
		\subset \Z$ with $n \in\N$, and disjoint sets
		$\N_0,\N_{+\infty},\N_{-\infty}\subset\N$, such that, for a
		renumbered subsequence of $(v_k)$,
		\begin{eqnarray}
		\label{w_n*}
		&&
		2^{-\frac{N-2}{2}j_k^{(n)}}v_k(2^{-j_k^{(n)}}\cdot+y_k^{(n)})\rightharpoonup 	w^{(n)},\;
		n\in\N,
		\\
		\label{separates*}
		&&
		|j_{k}^{(n)} -  j_{k}
		^{(m)}|+|2^{j_k^{(n)}}(y_{k} ^{(n)} - y_{k} ^{(m)})|\to \infty
		\mbox{ for } n \neq m,
		\\
		\label{norms*}
		&&
		\sum_{n\in \N} \|\nabla w
		^{(n)}\|_{2}^2 \le
		\limsup\|\nabla v_k\|_{2}^2,
		\\
		\label{BBasymptotics*}
		&&
		v_{k}  -  \sum_{n\in\N}
		2^{\frac{N-2}{2}j_k^{(n)}} w^{(n)}(2^{j_k^{(n)}}(\cdot-y_k^{(n)}))
		\to 0\; \mbox{in } L^{2^*}(\R^N),
		\end{eqnarray}
	 and, the series above 	converges in $\dot H^{1,2}(\R^N)$ unconditionally and uniformly
		with respect to $k$.
		\par Moreover,
		$1\in\N_0$, $y_{k} ^{(1)} =0$; $j_k^{(n)}=0$ whenever $n\in\N_0 $;
		$j_k^{(n)}\to -\infty$ (resp. $j_k^{(n)}\to +\infty$) whenever
		$n\in\N_{-\infty}$ (resp. $n\in\N_{+\infty}$); and $y_k^{(n)}=0$
		whenever $|2^{j_k^{(n)}}y_k^{(n)}|$ is bounded.
	\end{theorem}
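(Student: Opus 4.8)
The plan is to establish \eqref{w_n*}--\eqref{BBasymptotics*} by the now-standard iterated-extraction scheme for profile decompositions, whose only non-routine ingredient is a cocompactness lemma for the Sobolev embedding. Write $g_{j,y}u=2^{-\frac{N-2}{2}j}u(2^{-j}\cdot+y)$ for $(j,y)\in\Z\times\R^N$; a change of variables gives $\|\nabla(g_{j,y}u)\|_2=\|\nabla u\|_2$, so $D\eqdef\{g_{j,y}\}$ is a group of isometries of $\dot H^{1,2}(\R^N)$, with $g_{j,y}^{-1}w=2^{\frac{N-2}{2}j}w(2^{j}(\cdot-y))$, exactly the building block in \eqref{w_n*} and \eqref{BBasymptotics*}. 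Call a sequence $(g_k)\subset D$ \emph{divergent} if no subsequence converges in the strong operator topology to an element of $D$; with $g_k=g_{j_k,z_k}$ this means $|j_k|+|2^{j_k}z_k|\to\infty$ along every subsequence, and then $g_kh\rightharpoonup 0$ in $\dot H^{1,2}(\R^N)$ for every fixed $h$ (the standard weak-vanishing property of a dislocation group; see \cite{FT}). With this language, \eqref{separates*} says precisely that $g_{j_k^{(n)},y_k^{(n)}}\circ g_{j_k^{(m)},y_k^{(m)}}^{-1}$ is divergent for all $n\neq m$.

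The first and main step is the cocompactness lemma: if $(r_k)$ is bounded in $\dot H^{1,2}(\R^N)$ and $g_kr_k\rightharpoonup 0$ for every $(g_k)\subset D$, then $r_k\to 0$ in $L^{2^*}(\R^N)$. I would prove it from the refined Sobolev inequality of G\'erard--Meyer--Oru, $\|u\|_{2^*}\le C\|\nabla u\|_2^{\theta}\|u\|_{\dot B^{-\beta}_{\infty,\infty}}^{1-\theta}$ with $\theta\in(0,1)$ and $\beta>0$ fixed by scaling, together with the observation that the negative-smoothness Besov seminorm is, up to a constant, $\sup_{(j,y)}|(g_{j,y}u,\psi)|$ for a fixed Littlewood--Paley bump $\psi\in L^{(2^*)'}(\R^N)$: if $\|r_k\|_{\dot B^{-\beta}_{\infty,\infty}}\not\to0$, pick $(j_k,y_k)$ nearly attaining the supremum and extract, along a subsequence, $g_{j_k,y_k}r_k\rightharpoonup w$ with $(w,\psi)\neq0$, contradicting the hypothesis; hence $\|r_k\|_{\dot B^{-\beta}_{\infty,\infty}}\to0$ and the refined inequality gives the claim. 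Equivalently one runs a wavelet-basis argument in the spirit of Solimini: the $\dot H^{1,2}$-bound controls a weighted $\ell^2$-norm of the wavelet coefficients of $r_k$, the hypothesis annihilates their normalized $\ell^\infty$-norm, and $\ell^2$--$\ell^\infty$ interpolation in wavelet coordinates forces $r_k\to0$ in $L^{2^*}$. This is where essentially all the difficulty lies.

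Granting the lemma, I iterate. Pass to a subsequence with $v_k\rightharpoonup w^{(1)}$ and set $j_k^{(1)}=0$, $y_k^{(1)}=0$, $1\in\N_0$. Having chosen $w^{(1)},\dots,w^{(M)}$ and their dislocations, let $r_k^{(M)}=v_k-\sum_{n=1}^{M}g_{j_k^{(n)},y_k^{(n)}}^{-1}w^{(n)}$. If $\limsup_k\|r_k^{(M)}\|_{2^*}=0$ the construction terminates; otherwise the contrapositive of the lemma, together with a concentration-function choice of dislocation, yields $(j_k^{(M+1)},y_k^{(M+1)})$ with $g_{j_k^{(M+1)},y_k^{(M+1)}}r_k^{(M)}\rightharpoonup w^{(M+1)}$ and $\|\nabla w^{(M+1)}\|_2$ bounded below by a fixed positive power of $\limsup_k\|r_k^{(M)}\|_{2^*}$. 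Asymptotic orthogonality \eqref{separates*} against the earlier profiles is forced: were $g_{j_k^{(n)},y_k^{(n)}}\circ g_{j_k^{(M+1)},y_k^{(M+1)}}^{-1}$ not divergent for some $n\le M$, a subsequential limit $g\in D$ would make the $n$-th subtracted term converge strongly under $g_{j_k^{(M+1)},y_k^{(M+1)}}$, giving $w^{(M+1)}=0$, a contradiction. Since each dislocation is an isometry and asymptotically orthogonal dislocations kill cross terms in the limit (the Hilbert-space splitting $\|a_k+b_k\|^2=\|a_k\|^2+\|b_k\|^2+o(1)$ when the two pieces live on divergent dislocations), iterating gives $\sum_{n=1}^{M}\|\nabla w^{(n)}\|_2^2+\limsup_k\|\nabla r_k^{(M)}\|_2^2\le\limsup_k\|\nabla v_k\|_2^2$ for all $M$; letting $M\to\infty$ yields \eqref{norms*}, forces $\|\nabla w^{(M)}\|_2\to0$, and hence $\limsup_k\|r_k^{(M)}\|_{2^*}\to0$ as $M\to\infty$.

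Finally I assemble \eqref{BBasymptotics*} and the normalization. Because $\sum_n\|\nabla w^{(n)}\|_2^2<\infty$ and the dislocations are pairwise asymptotically orthogonal, for $M<M'$ one has $\|\sum_{M<n\le M'}g_{(\cdot)}^{-1}w^{(n)}\|_{\dot H^{1,2}}^2=\sum_{M<n\le M'}\|\nabla w^{(n)}\|_2^2+o(1)$ as $k\to\infty$, so a diagonal extraction in $k$ makes the reconstruction series converge in $\dot H^{1,2}(\R^N)$ unconditionally and uniformly in $k$; its tails are then small in $L^{2^*}(\R^N)$ uniformly in $k$, and since $v_k-\sum_{n\le M}g_{(\cdot)}^{-1}w^{(n)}=r_k^{(M)}$ with $\limsup_k\|r_k^{(M)}\|_{2^*}\to0$, \eqref{BBasymptotics*} follows. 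For the ``Moreover'' clause, for each fixed $n$ pass to a subsequence so that $j_k^{(n)}$ is either constant (normalized to $0$ by a fixed dilation of $w^{(n)}$, which puts $n\in\N_0$) or tends to $\pm\infty$ (putting $n\in\N_{\pm\infty}$); and whenever $|2^{j_k^{(n)}}y_k^{(n)}|$ stays bounded, pass to a subsequence so $2^{j_k^{(n)}}y_k^{(n)}$ converges and absorb its limit into $w^{(n)}$ by a translation, achieving $y_k^{(n)}=0$. A further diagonalization over $n$ gives a single subsequence of $(v_k)$ good for all profiles, and every earlier conclusion is stable under these bounded modifications of the $g_k^{(n)}$. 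The sole genuine obstacle in this program is the cocompactness lemma of the second paragraph; everything after it is the abstract profile-decomposition bookkeeping.
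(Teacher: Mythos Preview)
The paper does not actually prove this theorem: it is quoted in the Appendix as a known result from \cite{ST}, described there as ``a trivial refinement of the main theorem in \cite{Solimini}'', and is invoked without proof. So there is no proof in the paper to compare against.

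That said, your proposal is a faithful outline of the standard Solimini/abstract-concentration-compactness argument that underlies those references: the cocompactness lemma for the embedding $\dot H^{1,2}\hookrightarrow L^{2^*}$ with respect to the dilation-translation group, followed by the iterated extraction of profiles with asymptotic orthogonality in $\dot H^{1,2}$, the energy inequality \eqref{norms*}, and the remainder estimate \eqref{BBasymptotics*}. Your identification of the cocompactness step as the only genuinely nontrivial ingredient is correct, and both arguments you mention for it (refined Sobolev via G\'erard--Meyer--Oru, or the wavelet/Littlewood--Paley argument in the spirit of \cite{Solimini}) are among the standard routes surveyed in \cite{4coco}. The normalization in the ``Moreover'' clause and the unconditional, $k$-uniform convergence of the reconstruction series are handled exactly as you describe; the paper itself remarks that this unconditional convergence is implicit in the original proofs and made explicit in \cite{SoliTi}.
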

Note that the unconditional convergence of the series is not stated in the original version of the theorem, but can be easily inferred from the proof. This omission has been remedied in the Banach space version of the theorem in \cite{SoliTi}. This remark applies also to the be based on the profile decomposition in $H^{1,2}(\R^N)$ from \cite{FT}, Corollary~3.3 (it also can be derived from the result of Solimini in \cite{Solimini})

\begin{proposition}
	\label{acc-shifts}
	Let $u_{k}\in H^{1,2}(\R^N)$ be a bounded sequence. There
	exist $w^{(n)}  \in
	H$, $(y_{k} ^{(n)})_{k\in\N} \subset \Z^N$, $y_{k} ^{(1)} =0$, with  $n\in\N$,
	such that, on a renumbered subsequence,
	\begin{eqnarray}
	\label{w_n-shifts} && u_k(\cdot+y_{k} ^{(n)})\rightharpoonup w^{(n)},
	\\
	&&\label{separates-shifts} |y_{k} ^{(n)}-y_{k} ^{(m)}|\to\infty  \mbox{ for
	} n \neq m,
	\\
	&&\label{norms-shifts} \sum_{n \in \N} \|w ^{(n)}\|_{H^{1,2}}^2 \le
	\limsup \|u_k\|_{H^{1,2}}^2,
	\\
	&&\label{BBasymptotics-shifts} u_{k} - \sum_{n\in\N}
	w^{(n)}(\cdot-y_{k} ^{(n)})  \to 0\, \text{in }
	L^p(\R^N), p\in (2,2^*), 
	\end{eqnarray}
and the series in (\ref{BBasymptotics-shifts}) converges
	in $H^{1,2}(\R^N)$ unconditionally and uniformly in $k$.
\end{proposition}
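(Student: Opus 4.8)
The plan is to regard this as the standard profile decomposition associated with the \emph{cocompactness} of the embedding $H^{1,2}(\R^N)\hookrightarrow L^p(\R^N)$, $2<p<2^*$, relative to the group of integer translations; as remarked above it is proved in \cite{FT} (and can be deduced from \cite{Solimini}), so I would simply cite it, but a self-contained argument runs as follows. The one analytic ingredient I would isolate first is Lions' vanishing lemma in the form: if $(v_k)$ is bounded in $H^{1,2}(\R^N)$ and $v_k(\cdot+z_k)\rightharpoonup 0$ in $H^{1,2}(\R^N)$ for every $(z_k)\subset\Z^N$, then $v_k\to 0$ in $L^p(\R^N)$; and, crucially, a \emph{quantitative} version in which $\limsup_k\|v_k\|_p$ is bounded by a fixed power of $\sup\{\|w\|_{H^{1,2}}:\ w \text{ is a weak limit of } v_k(\cdot+z_k) \text{ along some subsequence, some }(z_k)\subset\Z^N\}$. (That $\Z^N$, rather than $\R^N$, suffices is because the fractional part of a translation ranges in a compact set; when a concentration point is first located in $\R^N$ I would replace it by its integer part and absorb the bounded remainder into the profile after a further subsequence, which does not affect \eqref{separates-shifts}.)

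Next I would set up the iteration. Pass to a subsequence with $u_k\rightharpoonup w^{(1)}$, put $y_k^{(1)}=0$ and $r_k^{(1)}=u_k-w^{(1)}$. Inductively, given $w^{(1)},\dots,w^{(n-1)}$, $(y_k^{(j)})_{j<n}$ and the remainder $r_k^{(n-1)}=u_k-\sum_{j<n}w^{(j)}(\cdot-y_k^{(j)})$, which is bounded in $H^{1,2}$ and $\rightharpoonup 0$: if $r_k^{(n-1)}\to 0$ in $L^p$ the process stops and all further profiles are declared $0$; otherwise the vanishing lemma furnishes $(z_k)\subset\Z^N$ and a subsequence along which $r_k^{(n-1)}(\cdot+z_k)\rightharpoonup w^{(n)}\neq0$; set $y_k^{(n)}=z_k$ and $r_k^{(n)}=r_k^{(n-1)}-w^{(n)}(\cdot-y_k^{(n)})$. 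The separation $|y_k^{(n)}-y_k^{(j)}|\to\infty$ for $j<n$ is forced: otherwise, passing to a subsequence on which $y_k^{(n)}-y_k^{(j)}$ is a fixed lattice vector, $w^{(n)}$ would be a translate of a quantity already extracted, contradicting $r_k^{(j)}\rightharpoonup0$. Then, using weak lower semicontinuity together with the asymptotic orthogonality in $H^{1,2}$ of translates by widely separated points (an iterated Brezis–Lieb identity), I would establish, for each fixed $n$,
\[
\|u_k\|_{H^{1,2}}^2=\sum_{j=1}^{n}\|w^{(j)}\|_{H^{1,2}}^2+\|r_k^{(n)}\|_{H^{1,2}}^2+o(1),\qquad k\to\infty,
\]
which yields \eqref{norms-shifts} and in particular $\|w^{(n)}\|_{H^{1,2}}\to0$.

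It remains to pass to the limit in $n$. Since $\sum_n\|w^{(n)}\|_{H^{1,2}}^2\le\limsup_k\|u_k\|_{H^{1,2}}^2<\infty$, the only profiles still available inside $r_k^{(n)}$ are $w^{(m)}$ with $m>n$, whose norms are small; feeding this into the quantitative vanishing lemma gives $\limsup_k\|r_k^{(n)}\|_p\le\varepsilon_n$ with $\varepsilon_n\to0$. A Cantor diagonal choice of a subsequence of $(u_k)$ then produces a remainder tending to $0$ in $L^p$, i.e. \eqref{BBasymptotics-shifts}; unconditional and $k$-uniform convergence of $\sum_n w^{(n)}(\cdot-y_k^{(n)})$ in $H^{1,2}(\R^N)$ follows once more from the asymptotic orthogonality of separated translates plus $\sum_n\|w^{(n)}\|_{H^{1,2}}^2<\infty$, and $w^{(1)}=\wlim u_k$ gives $y_k^{(1)}=0$.

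I expect the termination/convergence step to be the real obstacle: turning the qualitative dichotomy "either the remainder vanishes in $L^p$ or a new nonzero profile appears'' into the quantitative bound $\limsup_k\|r_k^{(n)}\|_p\le\varepsilon_n\to0$, and then executing the diagonal argument cleanly, is precisely where the quantitative form of the cocompactness lemma and careful bookkeeping of the asymptotic orthogonality are indispensable; everything else is routine weak-convergence manipulation.
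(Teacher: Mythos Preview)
The paper does not actually prove this proposition; it simply quotes it from \cite{FT}, Corollary~3.3 (noting it can also be extracted from \cite{Solimini}), so there is no ``paper's own proof'' to compare against. Your outline is a correct sketch of exactly the standard argument those references contain: cocompactness of the embedding under $\Z^N$-translations (Lions' vanishing lemma), the greedy extraction of profiles with diagonal refinement, the Hilbert-space energy splitting, and the quantitative control of the remainder. One terminological quibble: the identity $\|u_k\|_{H^{1,2}}^2=\sum_{j\le n}\|w^{(j)}\|_{H^{1,2}}^2+\|r_k^{(n)}\|_{H^{1,2}}^2+o(1)$ is not ``Brezis--Lieb'' but the simpler Hilbert-space Pythagorean relation $\|a_k\|^2=\|w\|^2+\|a_k-w\|^2+o(1)$ when $a_k\rightharpoonup w$, applied iteratively; the genuine Brezis--Lieb lemma is its nonlinear $L^p$ cousin. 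This does not affect the validity of your argument.
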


\bibliographystyle{amsplain}

\end{document}